\newtheorem{theorem}{Theorem}[section]
\newtheorem{proposition}{Proposition}[section]
\newtheorem{corollary}{Corollary}[section]
\begin{document}
\title[Volume estimates for right-angled polyhedra]{Volume estimates for right-angled hyperbolic polyhedra}
\author{A. Egorov, A. Vesnin}
\address{Novosibirsk State University, Novosibirsk, 630090, Russia} 
\email{a.egorov2@g.nsu.ru }
\address{Tomsk State University, Tomsk, 634050, Russia} 
\email{vesnin@math.nsc.ru}

\begin{abstract} 
By Andreev  theorem acute-angled polyhedra of finite vo\-lume in a hyperbolic space $\mathbb H^{3}$ are uniquely determined by combinatorics of their 1-skeletons and dihedral angles. For a class of compact right-angled polyhedra and a class of ideal right-angled polyhedra estimates of volumes in terms of the number of vertices were obtained by Atkinson in 2009. In the present paper upper estimates for both classes are improved.
\end{abstract}
\dedicatory{
To Bruno Zimmermann on his 70-th birthday
} 
\subjclass[2010]{52B10}	
\keywords{right-angled polyhedron, ideal polyhedron, hyperbolic volume} 
\thanks{The work was supported by the Theoretical Physics and Mathematics Advancement Foundation ``BASIS''} 
	
\maketitle 


	\section{Introduction}
	
	In the present paper we consider polyhedra of finite volume in a hyperbolic 3-space $\mathbb H^{3}$. 
	A  hyperbolic polyhedron is said to be \emph{right-angled} if all its dihedral angles are equal to $\pi/2$. 
	Necessary and sufficient conditions for realization in $\mathbb H^{3}$ of a polyhedron with given combinatorial type and dihedral angles were described by Andreev~\cite{A70}. Moreover, the realization is unique up to isometry.  A~hyperbolic polyhedron is said to be  \emph{ideal} if all its vertices belong to $\partial \mathbb H^{3}$. 
	The smallest 825 compact right-angled hyperbolic polyhedra were determined by Inoue~\cite{I20}. In~\cite{EV20} we listed 248 initial volumes of ideal right-angled hyperbolic polyhedra and formulated a conjecture about smallest volume polyhedra when number of vertices is fixed. 
	
	Right-angled polyhedra (both compact and ideal) are very useful building-blocks for construction of hyperbolic 3-manifolds with interesting properties~\cite{V87, V17}. In particular, the right-angled decomposition  gives an immersed totally geodesic surface in the 3-manifold arising from the faces of the polyhedra. 
	Recently nice combinatorial characterization of right-angled hyperbolic 3-orbifolds was obtained in~\cite{Lu20}. 
	
	It is known that complements of the Whitehead link and of the Borromean rings admit decompositions into ideal right-angled octahedra. 
	Following~\cite{CKP19}, a~hyperbolic link in 3-sphere is said to be right-angled if its complement equipped with  the complete hyperbolic structure admits a decomposition into ideal hyperbolic right-angled polyhedra.  Among non-alternating links, the class of fully augmented links is right-angled (see recent results on augmented links in~\cite{F18, MMT20}). 
	The following conjecture was proposed in~\cite{CKP19}: there does not exist a right-angled knot. In the same paper the conjecture was verified for knots up to $11$ crossings, basing on the tabulation of volumes of ideal right-angled polyhedra given in~\cite{EV20}.

	\section{Main results}
	
	Since right-angled hyperbolic polyhedra are determined by combinatorial structure, it is natural to expect that their geometrical invariants, e.g. volume, can be estimated by combinatorial invariants, e.g. number of vertices. 
	
	Two-sided volume estimates for ideal right-angled hyperbolic polyhedra in terms of the number of vertices were obtained by Atkinson~\cite{A09}. 
	
	\begin{theorem} {\rm \cite[Theorem~2.2.]{A09}} \label{th101} 
		If $P$ is an ideal right-angled hyperbolic polyhedron with  $V$ vertices, then 
		$$
		(V-2) \cdot \frac{v_{8}}{4} \leqslant \operatorname{vol} (P) \leqslant (V-4) \cdot \frac{v_{8}}{2},  
		$$
		where $v_{8}$ is the volume of a regular ideal hyperbolic octahedron. 
		Both inequalities are equalities when $P$ is the regular ideal hyperbolic octahedron. There is a sequence of ideal $\pi/2$-equiangular polyhedra $P_{i}$ with $V_{i}$ vertices such that $\operatorname{vol} (P_{i}) / V_{i}$ approaches $v_{8} / 2$ as $i$ goes to infinity. 
	\end{theorem}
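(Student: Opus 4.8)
The plan is to extract the combinatorics forced by the hypotheses and then compare $P$ with the regular ideal octahedron $O$, which realises both extremes. First I would record the skeleton of $P$: the link of an ideal vertex is a Euclidean polygon whose angles are the incident dihedral angles, all equal to $\pi/2$, hence a rectangle, so every vertex is $4$-valent. With Euler's formula this yields $E=2V$ and $F=V+2$, and a direct check confirms that $O$ (where $V=6$) attains equality in both inequalities. The two bounds would then come from two different decompositions whose pieces are controlled by $v_8$.

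For the lower bound I would fix one ideal vertex $v_\infty$ and cone the convex polyhedron from it. Since $v_\infty$ is $4$-valent, exactly four faces contain it, so coning produces one ideal pyramid over each of the remaining $F-4=V-2$ faces, and these pyramids tile $P$. The estimate $\operatorname{vol}(P)\ge(V-2)\,v_8/4$ would follow if each pyramid had volume at least $v_8/4$, the common volume of the four congruent ideal tetrahedra into which $O$ is cut by coning from one of its vertices. The main obstacle is that this per-piece bound is false for an arbitrary ideal pyramid --- ideal tetrahedra have volumes filling all of $(0,v_3]$, where $v_3$ is the regular value --- so the right-angled condition must be invoked to keep each pyramid away from the degenerate locus, for instance by subdividing into orthoschemes whose dihedral data is constrained by the perpendicularity of the faces. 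Should this resist, the fallback is a global argument: pass to the reflection orbifold, or to an incompressible-surface bound of Agol--Storm--Thurston type, in which the regular-octahedron volume $v_8$ enters as the natural constant, and estimate the volume from below combinatorially.

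For the upper bound I would induct on $V$ with the octahedron as base case. When $P\neq O$, the combinatorics of $4$-valent $3$-connected planar graphs should supply a separating totally geodesic surface --- a right-angled ideal quadrilateral arising from a prismatic $4$-circuit --- cutting $P$ into two ideal right-angled polyhedra $P_1,P_2$ with $\operatorname{vol}(P)=\operatorname{vol}(P_1)+\operatorname{vol}(P_2)$, since cutting along a totally geodesic surface is volume-additive. If the cut can be arranged so that $V_1+V_2\le V+4$, the inductive hypothesis $\operatorname{vol}(P_i)\le(V_i-4)\,v_8/2$ sums to $\operatorname{vol}(P)\le(V-4)\,v_8/2$. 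The hard part is structural and is where I expect the real difficulty: proving that such a decomposition always exists, and --- because a naive quadrilateral cut tends to over-count the new ideal vertices and yields only the weaker bound $\operatorname{vol}(P)\le V\,v_8/2$ --- choosing the reduction so that the vertex bookkeeping reproduces the coefficient $(V-4)/2$ exactly.

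Finally, to show that the upper bound is asymptotically sharp, I would exhibit an explicit family of ideal right-angled (equivalently $\pi/2$-equiangular) polyhedra $P_i$ with $V_i\to\infty$ --- an antiprism- or L\"obell-type sequence --- and compute $\operatorname{vol}(P_i)$ by triangulating into ideal tetrahedra and summing values of the Lobachevsky function $\Lambda$. The remaining task, which is computational rather than structural, is the asymptotic estimate showing that the interior vertices each contribute volume tending to $v_8/2$, so that $\operatorname{vol}(P_i)/V_i\to v_8/2$.
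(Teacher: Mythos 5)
First, note that the paper does not prove this statement at all: Theorem~\ref{th101} is Atkinson's theorem, quoted from \cite{A09} and used as a black box (its upper bound is the input to the doubling-and-limit arguments, as in Proposition~\ref{prop5}). So your attempt can only be judged on its own terms, and on those terms it is an outline whose two central steps are precisely the ones left open --- and both fail as formulated. For the lower bound, the per-piece claim that each pyramid in the cone decomposition has volume at least $v_{8}/4$ is false, and the right-angled hypothesis does not rescue it: coning a large right-angled polyhedron from one ideal vertex produces pyramids over faces far from the apex whose vertex sets, viewed on a horosphere at the apex, are conformally almost degenerate, so pieces of arbitrarily small volume do occur. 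Your fallback sentence gestures at the correct tool --- the constant $v_{8}/4$ per vertex does come from an Agol--Storm--Thurston-type lower bound applied to the cusped manifolds/orbifolds obtained by doubling (equivalently, via the alternating link whose diagram is the $4$-valent $1$-skeleton, where the octahedron corresponds to the Borromean rings, the equality case) --- but naming the tool in one clause, with no construction and no bookkeeping of the factor $2$ and the twist number $t=V$, is not a proof.

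The upper bound has a structural gap you yourself flag: a prismatic $4$-circuit in the $1$-skeleton does not in general bound a totally geodesic quadrilateral meeting the polyhedron orthogonally, so the two pieces of your cut are not right-angled polyhedra and the induction cannot run. The operation that does behave well is gluing/cutting along a common \emph{face} (dihedral angles $\pi/2+\pi/2=\pi$ make the face disappear), which is exactly how this paper uses $P\cup_{F}P$; but irreducible right-angled polyhedra (e.g.\ antiprisms) admit no such face-decomposition, so there is no inductive step, and even granting a quadrilateral cut you concede the vertex count gives $V_{1}+V_{2}=V+8$ and only the weaker bound. Finally, your sharpness family is the wrong one: the ideal right-angled $n$-antiprism has volume $2n\left(\Lambda\left(\frac{\pi}{4}+\frac{\pi}{2n}\right)+\Lambda\left(\frac{\pi}{4}-\frac{\pi}{2n}\right)\right)\to n\,v_{8}/2$ with $V=2n$, so $\operatorname{vol}(P_{n})/V_{n}\to v_{8}/4$: antiprisms witness the \emph{lower} asymptotic rate (they are the conjectured minimizers in \cite{EV20}), and L\"obell polyhedra are compact, hence irrelevant here. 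A sequence with $\operatorname{vol}(P_{i})/V_{i}\to v_{8}/2$ must instead come from polyhedra whose $4$-valent skeletons approximate the infinite square-weave pattern, where the volume density per vertex tends to $v_{8}/2$.
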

	
	Constant $v_{8}$ has an expression in terms of the Lobachevsky function, 
	$$
	\Lambda (x) = - \int_{0}^{x} \log | 2 \sin t | dt, 
	$$
	which dates back to N.\,I.~Lobachevsky's 1832 paper.  Namely, $v_{8} = 8 \Lambda (\pi/4)$. 
	To fifteen decimal places,  $v_{8}$ is $3.663862376708876$.
	
	Recall that if $P \subset \mathbb H^{3}$ is ideal right-angled, then $V \geqslant 6$. The case  $V = 6$ is realized for an octahedron that is an antiprism with a triangular top and bottom, the case $V = 8$ is realized for an antiprism with quadrilateral top and bottom, and there no ideal right-angled hyperbolic polyhedra with $V=7$ vertices. 
	The upper bound from Theorem~\ref {th101} can be improved if we exclude from consideration the two smallest ideal right-angled hyperbolic polyhedra. 
	
	\begin{theorem} \label{th102} 
		If $P$ is an ideal right-angled hyperbolic polyhedron with $V \geqslant 9$ vertices, then  
		$$
		\operatorname{vol} (P) \leqslant (V-5) \cdot \frac{v_{8}}{2}.
		$$ 
		The equality holds if and only if $V=9$. 
	\end{theorem}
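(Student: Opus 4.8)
The plan is to take Theorem~\ref{th101} as the starting point and to improve its upper bound by exactly $v_8/2$ once $V\geqslant 9$. First I would record the combinatorial constraints forced by the ideal right-angled condition: the horospherical link of every ideal vertex is a Euclidean rectangle, so $P$ is $4$-valent, whence $2E=4V$ and, by Euler's formula, $E=2V$ and $F=V+2$. Writing $p_k$ for the number of $k$-gonal faces, the relations $\sum_k p_k=V+2$ and $\sum_k k\,p_k=4V$ give $\sum_k(4-k)p_k=8$ and $\sum_k(k-3)p_k=V-6$; in particular $p_3\geqslant 8$ always, and the target inequality is equivalent to $\operatorname{vol}(P)\leqslant \tfrac{v_8}{2}(F-7)$. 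Since Theorem~\ref{th101} already yields $\operatorname{vol}(P)\leqslant \tfrac{v_8}{2}(V-4)$, it suffices to save a single summand $v_8/2$ for every $P$ with $V\geqslant 9$ and to show that this saving is sharp precisely when $V=9$.

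Next I would settle the extremal case $V=9$, which simultaneously produces the equality polyhedron. Here $F=11$ and $E=18$, and the face-counting relations leave only a short list of admissible face vectors; among the realizable ones (those free of prismatic $3$- and $4$-circuits) a direct computation, or the census of~\cite{EV20}, singles out the symmetric polyhedron $Q$ with $8$ triangular and $3$ quadrilateral faces as the volume maximizer. I would exhibit $Q$ as the union of two regular ideal octahedra glued along a common triangular face: along the three seam edges the dihedral angle becomes $\pi/2+\pi/2=\pi$, so the seam edges disappear, the two triangles meeting across each seam edge merge into a quadrilateral, and every former seam vertex is again $4$-valent. Thus $Q$ is a genuine ideal right-angled polyhedron and, because the gluing is along a totally geodesic ideal triangle, volumes add: $\operatorname{vol}(Q)=2v_8=\tfrac{v_8}{2}(9-5)$. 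This both realizes equality at $V=9$ and identifies the extremal polyhedron.

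For $V\geqslant 10$ the extra $v_8/2$ has to be produced as genuine slack rather than by an exact decomposition: a generic ideal right-angled polyhedron is geometrically prime, containing no interior totally geodesic surface, so, unlike the extremal $Q$, it cannot be cut into smaller right-angled blocks, and the inverse of the octahedral gluing is unavailable. The plan is to revisit the decomposition underlying the upper bound of Theorem~\ref{th101} and to show that, once $V\geqslant 9$, one of its intermediate inequalities can be sharpened by the fixed amount $v_8/2$, with equality forced only in the fully symmetric configuration $Q$; strictness of this sharpening for every $V\geqslant 10$ would then yield the ``only if'' part of the equality statement. The main obstacle is precisely this uniform quantitative strictness: the estimate must improve by the same $v_8/2$ for arbitrarily large $V$, and Atkinson's equiangular family, for which $\operatorname{vol}(P)/V\to v_8/2$ from below, shows both that the bound is only asymptotically tight and that no larger uniform gain is available. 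As a safeguard, the finitely many combinatorial types with $V$ below a fixed threshold can be verified directly against the census of~\cite{EV20}, leaving the analytic argument to the range where the surplus in Atkinson's estimate is comfortably larger than $v_8/2$.
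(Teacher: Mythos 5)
Your proposal has a genuine gap: the entire case $V\geqslant 10$ --- and with it the ``only if'' half of the equality statement --- is never actually proved. Your third paragraph is an announced plan, not an argument: you say you will ``revisit the decomposition underlying the upper bound of Theorem~\ref{th101}'' and sharpen ``one of its intermediate inequalities'' by a uniform $v_{8}/2$, and you yourself identify the uniform quantitative strictness as ``the main obstacle'' without resolving it. No mechanism is supplied that produces the saving, and your closing remark about restricting the analytic argument ``to the range where the surplus in Atkinson's estimate is comfortably larger than $v_{8}/2$'' has no content, since Atkinson's asymptotically sharp family gives you no a priori range where such a surplus exists. What is missing is precisely the paper's key device: instead of cutting $P$ (which, as you correctly note, is generally impossible), one \emph{glues} copies of $P$ along faces --- always possible for a right-angled polyhedron --- and applies the known upper bound to the glued polyhedron. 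Gluing two copies along an $n_{2}$-gonal face $F_{2}$ yields an ideal right-angled polyhedron with $2V-n_{2}$ vertices in which the faces adjacent to $F_{2}$ merge pairwise into larger faces; combined with the bound of Proposition~\ref{prop1} this gives Proposition~\ref{prop3}, where the vertex count of large faces is converted into the needed saving. Your stated reason why the $V=9$ trick cannot generalize (geometric primality, unavailability of the inverse gluing) therefore points away from the correct idea: the forward gluing, not the cutting, is what drives the proof.

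The theorem then follows from an exhaustive case analysis on the face vector, none of which appears in your proposal: two faces with at least $5$ sides each (Corollary~\ref{cor1}); some face with at least $6$ sides, handled by applying Proposition~\ref{prop3} to it and two neighbors (Corollary~\ref{cor2}); only triangular and quadrilateral faces (Proposition~\ref{prop2}); and exactly one pentagon with the rest triangles and quadrilaterals, where a counting argument using $p_{3}=8+\sum_{k\geqslant 5}(k-4)p_{k}$ forces three sequentially adjacent quadrilateral/pentagonal faces (Proposition~\ref{prop4}). These cover $V\geqslant 16$, and the finitely many cases $9\leqslant V\leqslant 15$ are checked against the census of~\cite{EV20}, which also yields strictness for $10\leqslant V\leqslant 15$. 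On the positive side, your first paragraph's combinatorics is correct, and your second paragraph is sound and even adds something the paper only implicitly contains: gluing two regular ideal octahedra along a triangular face does produce the unique $V=9$ polyhedron ($p_{3}=8$, $p_{4}=3$) of volume $2v_{8}=(9-5)\cdot v_{8}/2$, cleanly realizing the equality case. But an explicit extremal example plus an unexecuted plan does not prove the inequality for $V\geqslant 10$, so the proposal as written does not establish the theorem.
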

	
	Proof of Theorem~\ref{th102} is given in Section~\ref{sec2}. In Figure~\ref{fig1} dots present volumes of ideal right-angled hyperbolic polyhedra with at most $21$ vertices, calculated in~\cite{EV20}, and lines present volume estimates from Theorem~\ref{th101} (lower bound) and Theorem~\ref{th102} (upper bound).
	\begin{figure}[ht]
		\centering
		\includegraphics[width=1.0\textwidth]{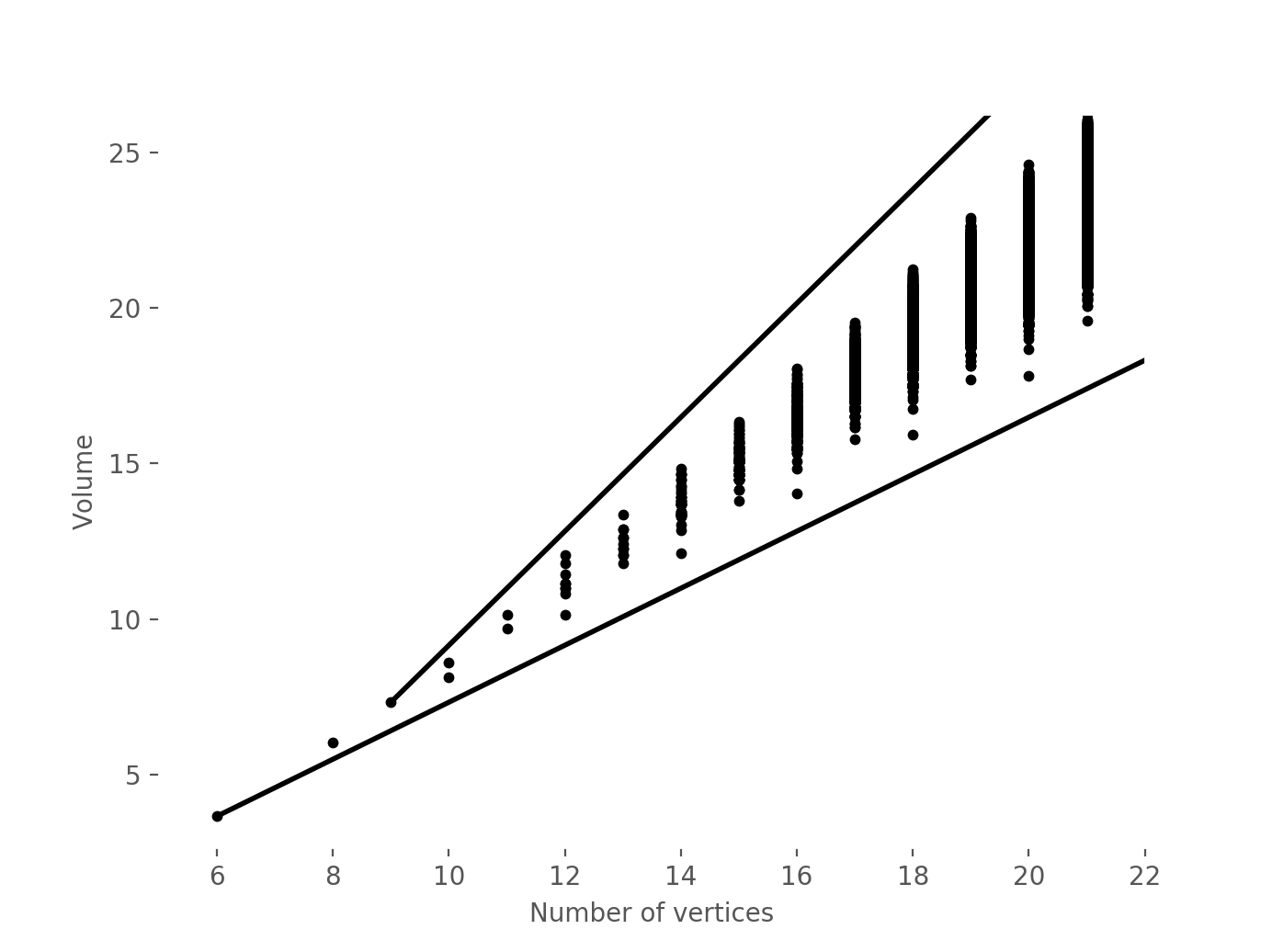}
		\caption{Volumes of ideal right-angled hyperbolic polyhedra  and volume estimates from Theorem~\ref{th101} and Theorem~\ref{th102}.} \label{fig1}
	\end{figure}
	
	Two-sided volume estimates for compact right-angled polyhedra in terms of number of vertices were obtained by Atkinson in~\cite{A09}.
	
	\begin{theorem} {\rm \cite[Theorem 2.3]{A09}} \label{th103} 
		If $P$ is a compact right-angled hyperbolic polyhedron with $V$ vertices, then 
		$$
		(V-8) \cdot \frac{v_{8}}{32} \leqslant \operatorname{vol} (P) < ( V -10) \cdot \frac{5v_{3}}{8},  
		$$
		where $v_{3}$ is the volume of a regular ideal hyperbolic tetrahedron. There is a sequence of compact polyhedra $P_{i}$, with $V_{i}$ vertices such that $\operatorname{vol} (P_{i}) / V_{i}$ approaches $5 v_{3} / 8$ as $i$ goes to infinity. 
	\end{theorem}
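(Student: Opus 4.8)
The plan is to combine the rigid combinatorial structure forced by right angles with sharp volume bounds for the tetrahedra into which $P$ decomposes. First I would record the combinatorics. At a vertex of a compact right-angled polyhedron the link is a spherical polygon all of whose angles equal $\pi/2$, and the only such polygon is the right-angled triangle, so every vertex is trivalent. Hence $2E = 3V$, and Euler's formula $V - E + F = 2$ gives $E = \tfrac{3}{2}V$ and $F = \tfrac{1}{2}V + 2$. Since for a trivalent polyhedron $\sum_{f}(6 - n_f) = 6F - 2E = 12$, not every face can be a hexagon or larger; Andreev's conditions in fact force every face to have at least five sides, so the minimal case is $12$ pentagons, i.e. the right-angled dodecahedron with $V = 20$. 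This is why the estimates live in the regime where $V - 8$ and $V - 10$ are positive.

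For the upper bound I would subdivide $P$ into hyperbolic tetrahedra adapted to its flag structure, for instance by coning each face to an interior point and then to the face centers, or by using the orthoscheme decomposition coming from the reflection group generated by the faces. The number of cells is a fixed linear function of $V$, read off from $E = \tfrac32 V$, $F = \tfrac12 V + 2$, and the pentagon count $\sum_f(6-n_f)=12$. The key analytic input is the classical fact that every hyperbolic tetrahedron has volume strictly less than $v_3$, the supremum being approached only by the regular ideal simplex (Milnor, Haagerup--Munkholm). Combining the per-cell bound with the cell count is what is meant to produce the coefficient $5 v_3/8$ and the additive correction $-10$, with strictness reflecting that the extremal regular ideal shape is never realized inside a compact right-angled polyhedron.

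The lower bound I would obtain from a complementary, finer decomposition into birectangular tetrahedra (orthoschemes) together with a global comparison to the regular ideal octahedron of volume $v_8$; the constant $v_8/32$ is the volume density of the relevant building block per vertex, and the additive constant $-8$ is pinned down by matching the dodecahedron. For the asymptotic sharpness I would exhibit an explicit family $P_i$ — for example polyhedra whose faces are predominantly pentagons, built by repeatedly stacking a fixed right-angled block — and verify by a direct Lobachevsky-function computation that $\operatorname{vol}(P_i)/V_i \to 5 v_3/8$.

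The hard part will be the upper bound, and within it two points. First, extracting the \emph{sharp} coefficient $5 v_3/8$ rather than a cruder constant requires the tetrahedral decomposition to be chosen so that the per-cell maxima aggregate correctly against the combinatorial counts: a naive subdivision together with the blanket bound $v_3$ per cell overcounts drastically (already for the dodecahedron, whose true volume is far below what such counting predicts), so the decomposition must genuinely exploit the right-angled geometry. Second, fixing the additive term $-10$ and the strict inequality, and then confirming that the stacking family attains the density $5 v_3/8$ in the limit, are the most delicate steps; the lower bound and the base combinatorics are comparatively routine once the decomposition is in hand.
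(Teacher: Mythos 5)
This theorem is not proved in the paper at all: it is Atkinson's result, quoted verbatim from \cite[Theorem 2.3]{A09} and used as an input --- its upper bound is precisely what drives the limiting argument in the proof of Proposition~\ref{prop5}. So there is no in-paper proof to compare against, and your proposal has to be judged against Atkinson's original argument. Judged that way, the preliminaries you give (trivalence of vertices, $E=\tfrac{3}{2}V$, $F=\tfrac{V}{2}+2$, faces at least pentagonal, $V\geqslant 20$) are correct but routine, while the step you yourself identify as the hard one is not merely missing: the route you propose for it provably cannot work. Any triangulation of $P$, with or without Steiner points, induces a triangulation of $\partial P$ refining the face structure, which therefore contains at least $\sum_{f}(n_{f}-2)=2E-2F=2V-4$ triangles; each tetrahedron exposes at most $3$ faces on $\partial P$ (all $4$ only if $P$ is itself a tetrahedron, impossible here), so every triangulation of $P$ has at least $(2V-4)/3$ cells. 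Since $(2V-4)/3-\tfrac{5}{8}(V-10)=(V+118)/24>0$ for all $V$, the blanket bound ``volume $<v_{3}$ per cell'' applied to \emph{any} decomposition of $P$ itself gives at best a coefficient $\tfrac{2}{3}v_{3}$ per vertex, strictly weaker than $\tfrac{5}{8}v_{3}$ for every $V$. The density $5v_{3}/8$ cannot be extracted from a cell count inside $P$; Atkinson obtains it by gluing copies of $P$ (the same doubling-and-limit technique this paper deploys in Proposition~\ref{prop5}), so that the tetrahedral counting happens in objects built from many copies of $P$, not in $P$ alone.

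Your treatment of the constants is also not a proof mechanism. You say the additive terms are ``pinned down by matching the dodecahedron,'' but neither bound is tight there: for $V=20$ the claimed upper bound gives $\tfrac{50v_{3}}{8}\approx 6.34$ against the true volume $\approx 4.31$, and the lower bound gives $12\cdot\tfrac{v_{8}}{32}\approx 1.37$; indeed the very existence of Theorem~\ref{th104}, which improves $-10$ to $-14$ for $V\geqslant 24$, shows the additive constant is an artifact of the method rather than something determined by an extremal example. For the lower bound you would need an actual lemma producing disjoint regions of volume at least $v_{8}/32$ per vertex (or a global argument of that strength), none of which is sketched, and for the asymptotic statement it is not enough to note that stacking a block increases the volume-per-vertex ratio --- one must identify the limiting geometry and verify that the density converges to exactly $5v_{3}/8$, which requires a concrete family and computation. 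In short: correct combinatorial setup, correct folklore fact that straight hyperbolic tetrahedra have volume less than $v_{3}$, but the decisive ideas --- passing from $P$ to glued manifolds/polyhedra and running a limiting argument --- are absent, and the substitute you propose is blocked by the counting obstruction above.
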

	
	Constant $v_{3}$ has an expression in terms of the Lobachevsky function, $v_{3} = 2 \Lambda (\pi/6)$.  
	To fifteen decimal places, $v_{3}$ is $1.014941606409653$.
	
	Recall that if $P \subset \mathbb H^{3}$ is compact right-angled, then $V \geqslant 20$ and even. The case $V = 20$ is realized for a dodecahedron, and there no compact right-angled hyperbolic polyhedra with $V=22$ vertices. 
	The upper bound from Theorem~\ref {th103} can be improved if we exclude a dodecahedron from our considerations. 
	
	\begin{theorem} \label{th104} 
		If $P$  is a compact right-angled hyperbolic polyhedron with $V \geqslant 24$ vertices, then 
		$$
		\operatorname{vol} (P) \leqslant  (V-14) \cdot \frac{5v_{3}}{8}.
		$$ 
	\end{theorem}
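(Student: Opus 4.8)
The plan is to improve the upper bound in Theorem~\ref{th103} by a combination of a decomposition/induction argument and explicit treatment of the smallest cases, paralleling the strategy suggested by Theorem~\ref{th102} in the ideal case. Atkinson's upper bound $(V-10)\cdot 5v_3/8$ comes from bounding the volume of a compact right-angled polyhedron by decomposing it via its Andreev realization and estimating the contribution of tetrahedra; the factor $5v_3/8$ is the asymptotic volume-per-vertex. Since the dodecahedron ($V=20$) is the unique minimal case and there is no polyhedron with $V=22$, the next admissible value is $V=24$. The idea is that by excluding the single extremal configuration responsible for Atkinson's bound being tight only asymptotically, we can sharpen the additive constant from $-10$ to $-14$.

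First I would recall the precise mechanism behind Atkinson's strict inequality. His bound is obtained by a volume density estimate: a compact right-angled polyhedron can be subdivided (e.g.\ by coning from an interior point, or by the canonical cell decomposition) into pieces whose volumes are controlled by $v_3$, and the strictness ``$<$'' reflects that the ideal regular tetrahedron density is never exactly achieved for compact pieces. The refinement I would pursue is to show that the quantity $\operatorname{vol}(P) - (V-14)\cdot 5v_3/8$ is nonpositive for all admissible $V\geqslant 24$. A natural route is an inductive/monotonicity argument: show that any compact right-angled polyhedron with $V\geqslant 24$ vertices either is one of finitely many base cases that can be checked directly against the tabulated or computed volumes (using Inoue's enumeration~\cite{I20} of the smallest 825 such polyhedra), or admits a combinatorial reduction (deleting or contracting an edge/face, or splitting along a prismatic $4$-circuit) that decreases $V$ while controlling the volume change.

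The key steps, in order, would be: \emph{(i)} isolate from Atkinson's proof the exact inequality $\operatorname{vol}(P) \leqslant (V-10)\cdot 5v_3/8 - C$ where $C$ is a nonnegative defect term, and show $C \geqslant 4\cdot 5v_3/8 = 5v_3/2$ whenever the extremal asymptotic configuration is avoided, i.e.\ for $V\geqslant 24$; \emph{(ii)} verify the claimed bound directly for the finitely many small polyhedra with $V$ between $24$ and some explicit threshold $V_0$, using the numerical data of~\cite{EV20, I20}; \emph{(iii)} for $V> V_0$, run an inductive step that passes from $P$ to a smaller right-angled polyhedron, checking that the per-step volume decrease dominates the $5v_3/8$ per removed vertex so that the inequality propagates. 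Since the compact right-angled category is closed under the relevant combinatorial surgeries (each preserving the Andreev conditions, in particular the no-prismatic-$3$- or $4$-circuit requirement), the induction stays within the class.

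The main obstacle I anticipate is step~\emph{(iii)}: controlling the volume change under a combinatorial reduction. Unlike the ideal case, where removing vertices corresponds cleanly to gluing or cutting along ideal polygons with quantized volume contributions tied to $v_8$, the compact case has no exact volume additivity, so the surgery only yields an \emph{inequality} on volumes, and bounding the defect uniformly from below by $5v_3/8$ per vertex is delicate. I would likely need a sharpened version of Atkinson's density estimate that is sensitive to the number of faces or the absence of the extremal thin configuration, perhaps via a Schläfli-type differential argument or a careful accounting of the truncated-tetrahedron decomposition, to guarantee the defect $C$ is at least $5v_3/2$ for all non-dodecahedral compact right-angled polyhedra. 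Establishing that uniform lower bound on the defect, rather than merely the asymptotic statement in Theorem~\ref{th103}, is the crux of the proof.
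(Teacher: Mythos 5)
Your proposal is a programme rather than a proof, and its two load-bearing steps are precisely the ones left unestablished. Step \emph{(i)} asks for a uniform defect $C \geqslant 5v_3/2$ extracted from Atkinson's density argument ``whenever the extremal asymptotic configuration is avoided, i.e.\ for $V \geqslant 24$'' --- but this framing is incoherent as stated: the extremal sequences in Theorem~\ref{th103} have $V_i \to \infty$, so a lower bound on $V$ does not avoid them, and you give no mechanism (Schl\"afli differential or otherwise) for producing such a defect. Step \emph{(iii)} posits combinatorial reductions (edge contraction, splitting along prismatic $4$-circuits) under which the volume drop per removed vertex dominates $5v_3/8$; you yourself flag this as the crux, and indeed no such per-vertex volume control is known or supplied --- the compact case has no volume additivity, and verifying that each surgery preserves Andreev's conditions while decreasing volume by the required amount is exactly the hard content you would need to invent. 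Only your step \emph{(ii)}, the numerical verification of small cases, matches something in the actual argument (the paper checks $24 \leqslant V \leqslant 46$ against computed volumes, Table~\ref{table2}).

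The paper's route avoids both of your hard steps entirely: it never sharpens Atkinson's density estimate, but instead uses the upper bound of Theorem~\ref{th103} as a black box applied to \emph{glued unions} of copies of $P$. Gluing copies of $P$ alternately along two faces with $n_1, n_2 \geqslant 6$ sides produces polyhedra $P_{2k+1}$ with $(2k+1)V - 2kn_1 - 2kn_2$ vertices and volume $(2k+1)\operatorname{vol}(P)$; applying Atkinson's bound and letting $k \to \infty$ makes the additive constant $-10$ wash out, yielding $\operatorname{vol}(P) \leqslant (V - n_1 - n_2)\cdot 5v_3/8$ (Proposition~\ref{prop5}), and a doubling trick extends this to triples of consecutively adjacent faces (Proposition~\ref{prop6}). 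The improvement to $-14$ then comes from pure combinatorics via the Euler relation $p_5 = 12 + \sum_{k \geqslant 7}(k-6)p_k$: for $V \geqslant 48$ either $P$ has two faces of at least $7$ sides, or a face with at least $8$ sides, or a counting argument forces three consecutively adjacent hexagonal (or hexagonal/heptagonal) faces, and each case feeds into the propositions above. Without the limiting-gluing idea, or some proved substitute for your defect bound, your outline does not close.
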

	
	Proof of Theorem~\ref{th104} is given in Section~\ref{sec3}. In Figure~\ref{fig2} dots presents vo\-lumes of compact right-angled hyperbolic polyhedra with at most $46$ vertices, and lines present volume estimates from Theorem~\ref{th103} (lower bound) and Theorem~\ref{th104} (upper bound). Previously, volumes of the first $825$ compact right-angled hyperbolic polyhedra were calculated in~\cite{I20} and volumes of compact right-angled polyhedra with at most $64$ vertices having only pentagonal and hexagonal faces were calculated in~\cite{EV21}.
	
	\begin{figure}[ht]
		\centering
		\includegraphics[width=1.0\textwidth]{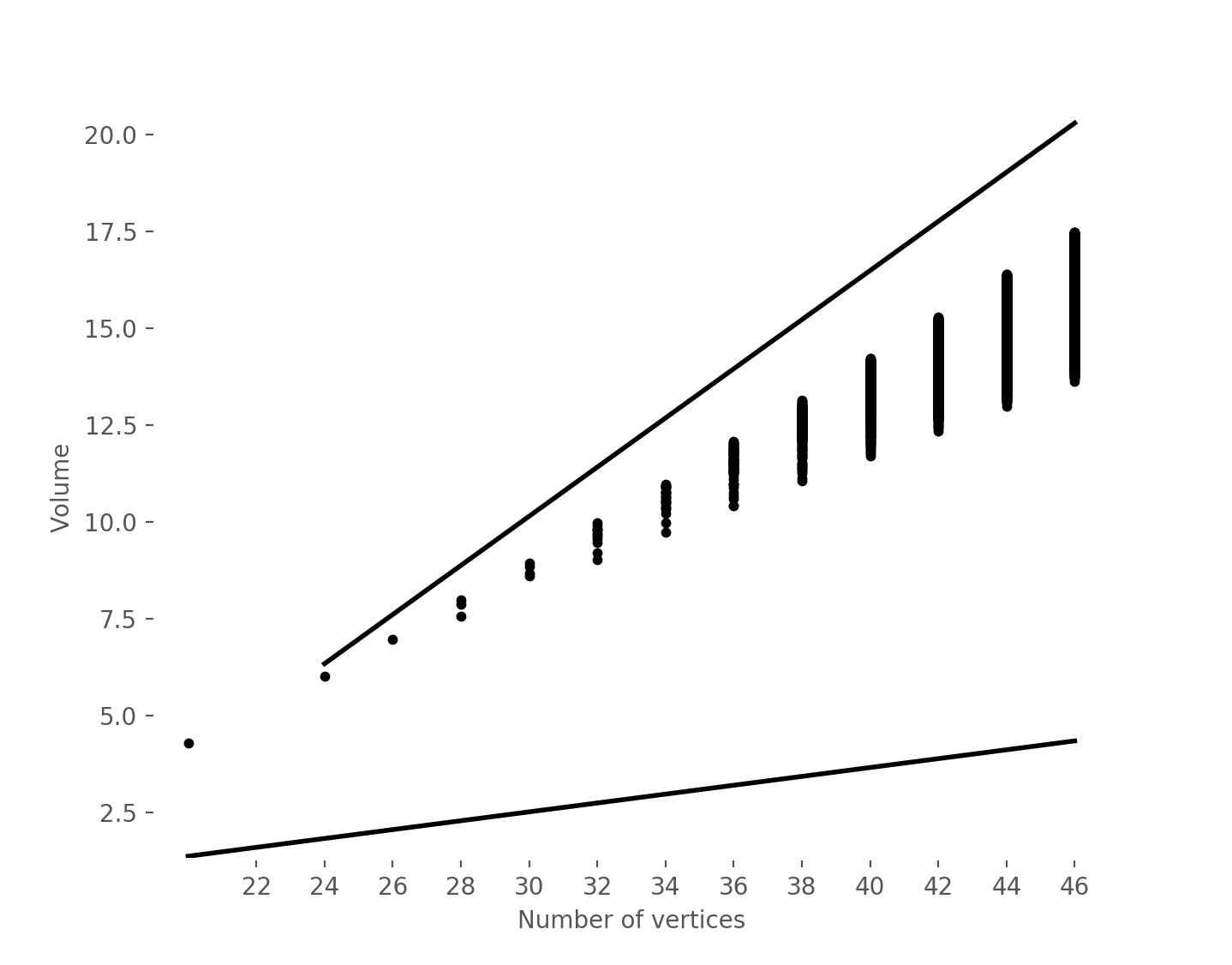}
		\caption{Volumes of compact right-angled hyperbolic polyhedra and volume estimates from Theorem~\ref{th103} and Theorem~\ref{th104}.} \label{fig2}
	\end{figure}	
	
	\section{Proof of Theorem~\ref{th102}}  \label{sec2} 
	
	We recall that 1-skeleton of any ideal right-angled hyperbolic polyhedra is 4-regular planar graph. Therefore, if $P$ is such a polyhedron with $V$ vertices and $F$ faces, then by the Euler formula $V = F-2$. 
	
	The upper estimate from Theorem~\ref {th101} was generalized in~\cite{EV20} as in Proposition~\ref{prop1} and Proposition~\ref{prop2}. 
	
	\begin{proposition} {\rm \cite[Theorem~3.2]{EV20}} \label{prop1} 
		Let $P$ be an ideal right-angled hyperbolic polyhedron with $V>6$ vertices, If $P$ has two faces $F_{1}$ and $F_ {2}$ such that $F_ {1}$ is $n_ {1}$-gonal, and $F_ {2}$ is $n_{2}$-gonal, where $n_{1}, n_{2} \geqslant 4$, then  
		$$
		\operatorname{vol} (P) \leqslant \left(V - \frac{n_{1}}{2} - \frac{n_{2}}{2} \right) \cdot \frac{v_{8}}{2}. 
		$$
	\end{proposition}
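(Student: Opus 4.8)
The plan is to recast everything through the normalised \emph{deficit}
$$D(P) \;=\; V \;-\; \frac{2}{v_{8}}\operatorname{vol}(P),$$
so that Atkinson's upper bound (Theorem~\ref{th101}) becomes the clean a~priori estimate $D(P)\geqslant 4$, valid for \emph{every} ideal right-angled polyhedron, while the inequality to be proved is simply $D(P)\geqslant \frac{n_{1}}{2}+\frac{n_{2}}{2}$. The whole argument is powered by a single move: the metric double $DP$ obtained by reflecting $P$ in the totally geodesic plane of a chosen face $F$. Because every face meeting $F$ does so orthogonally, the reflected copy extends those faces as flat coplanar continuations, so $DP$ is again a convex ideal right-angled polyhedron. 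If $F$ is an $m$-gon, then the $m$ vertices of $F$ are shared while all others are duplicated, so $DP$ has $2V-m$ vertices and volume $2\operatorname{vol}(P)$, which gives the bookkeeping identity $D(DP)=2D(P)-m$.

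The heart of the proof is the case $n_{1}=n_{2}=n$ with $F_{1},F_{2}$ disjoint. I would first observe that doubling across $F_{1}$ produces a polyhedron $Q$ that again carries \emph{two} disjoint $n$-gonal faces, namely $F_{2}$ and its mirror image: disjointness of $F_{1}$ and $F_{2}$ guarantees that $F_{2}$ avoids the reflecting plane, so it survives intact and is not fused with its copy. Thus the class $\mathcal{S}_{n}$ of ideal right-angled polyhedra possessing two disjoint $n$-gonal faces is preserved by this doubling, and on it the identity reads $D(DP)=2D(P)-n$. Now suppose some $P\in\mathcal{S}_{n}$ had $D(P)<n$. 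Iterating the double $k$ times and telescoping the recursion gives $D(P_{k})-n=2^{k}\bigl(D(P)-n\bigr)$, whence $D(P_{k})\to-\infty$; this contradicts the a~priori lower bound $D(P_{k})\geqslant 4$ furnished by Atkinson's theorem. Hence $D(P)\geqslant n$ for all $P\in\mathcal{S}_{n}$, which is exactly the desired estimate in the equal case.

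The general statement then follows from one further doubling. Assuming $n_{1}\geqslant n_{2}$ and $F_{1},F_{2}$ disjoint, I would double $P$ across $F_{1}$; the resulting $Q$ lies in $\mathcal{S}_{n_{2}}$, so by the previous step $D(Q)\geqslant n_{2}$, and combined with $D(Q)=2D(P)-n_{1}$ this yields $D(P)\geqslant \frac{n_{1}+n_{2}}{2}$, the claimed inequality. The remaining situation, in which $F_{1}$ and $F_{2}$ meet, only improves matters: doubling across $F_{1}$ then fuses $F_{2}$ with its mirror into a single larger face, and the one-face specialisation of the estimate (which follows at once from $D(DP)=2D(P)-m\geqslant 4$) applied to that face gives a strictly stronger bound.

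I expect the main obstacle to be conceptual rather than computational. The naive single doubling yields only $D(P)\geqslant \tfrac{m}{2}+2$ from one $m$-gonal face (this is precisely the case $n_{2}=4$), and it is not at all clear how to capture \emph{both} faces symmetrically, since two disjoint faces cannot be doubled simultaneously inside a finite polyhedron. The device that resolves this is the self-improving recursion $D\mapsto 2D-n$ on the doubling-invariant class $\mathcal{S}_{n}$, whose only escape from the forced blow-up $D\to-\infty$ is $D\geqslant n$. The points demanding care are the verification that the metric double is genuinely a convex ideal right-angled polyhedron (orthogonality along $F$ together with $4$-valence of the seam vertices) and the check that disjointness of the two distinguished faces is preserved under doubling, so that the recursion indeed stays within $\mathcal{S}_{n}$.
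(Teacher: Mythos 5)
Your core mechanism---double along distinguished faces, track the deficit $D(P)=V-\frac{2}{v_8}\operatorname{vol}(P)$, and let Atkinson's a priori bound $D\geqslant 4$ (Theorem~\ref{th101}) force the conclusion in the limit---is essentially the method this paper uses: the compact analogue (Proposition~\ref{prop5}) is proved by gluing copies of $P$ along faces isometric to $F_1$ and $F_2$ and applying Theorem~\ref{th103} to arbitrarily large assemblies, and your bookkeeping identities ($2V-m$ vertices for the double across an $m$-gon, an edge-adjacent $n$-gon fusing with its mirror into a $(2n-2)$-gon) are exactly those in the proof of Proposition~\ref{prop3}. Your exponential doubling with the recursion $D\mapsto 2D-n$ and forced blow-up is a correct repackaging of the paper's linear chain $P\cup_{F_1}P\cup_{F_2}P\cup\cdots$ followed by dividing by the number of copies and letting $k\to\infty$; both extract the same limit from Atkinson's bound.

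There is, however, one genuine hole in your case analysis, specific to the ideal setting. You split into ``$F_1,F_2$ disjoint'' and ``$F_1,F_2$ meet, whence $F_2$ fuses with its mirror.'' But the $1$-skeleton of an ideal right-angled polyhedron is $4$-regular, so two faces can share a single \emph{ideal vertex} without sharing an edge (the two opposite faces among the four meeting at a vertex), and both such faces can have at least four sides. This case falls through your dichotomy: in your ``meet'' branch no fusion occurs---$F_2$ touches the reflecting plane only at the fixed ideal point and survives intact as an $n_2$-gon---so there is no $(2n_2-2)$-gon to feed the one-face bound, and applying that bound to the surviving $n_2$-gon yields only $D(P)\geqslant \frac{1}{2}\bigl(n_1+\frac{n_2}{2}+2\bigr)$, which is weaker than the claim whenever $n_2\geqslant 5$; meanwhile your ``disjoint'' branch, as justified (``$F_2$ avoids the reflecting plane''), does not literally cover it either. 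The repair is cheap: run your recursion on the class of pairs of $n$-gonal faces sharing \emph{no edge}. By convexity the support plane of the reflecting face meets $P$ exactly in that face, so a face touching it only at ideal vertices survives intact and meets its mirror image only at those fixed ideal points; hence no edge is shared, the class is preserved under doubling, and the identity $D\mapsto 2D-n$ with the blow-up argument goes through verbatim. (In the compact $3$-valent case of Proposition~\ref{prop5} this subtlety cannot arise, since there two faces sharing a vertex necessarily share an edge---which is presumably why it is easy to overlook here.) With this emendation your proof is complete.
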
 
	
	Considering $n_{1}\geqslant 5$ and $n_{2} \geqslant 5$ in Proposition~\ref{prop1}, we immediately get the following result. 
	
	\begin{corollary} \label{cor1} 
		If $P$ is an ideal right-angled hyperbolic polyhedron with $V$ vertices that has two faces such that each of them is at least $5$-gonal, then  
		$$
		\operatorname{vol} (P) \leqslant (V-5) \cdot \frac{v_{8}}{2}.
		$$ 
	\end{corollary}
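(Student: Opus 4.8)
The plan is to derive the corollary directly from Proposition~\ref{prop1} by choosing the two large faces and exploiting the monotonicity of the bound in $n_1$ and $n_2$. First I would invoke the hypothesis to select the two faces it guarantees: call them $F_1$ and $F_2$, where $F_1$ is $n_1$-gonal and $F_2$ is $n_2$-gonal, and record that $n_1, n_2 \geqslant 5$. In particular $n_1, n_2 \geqslant 4$, so this pair of faces satisfies exactly the requirement imposed in Proposition~\ref{prop1}.

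Before applying the proposition I would verify its standing assumption $V > 6$. Since the only ideal right-angled hyperbolic polyhedron with $V = 6$ is the octahedron (the antiprism with triangular top and bottom), all of whose faces are triangles, the mere existence of a face with at least five sides forces $V > 6$. Hence Proposition~\ref{prop1} applies to $P$ with the chosen faces $F_1$ and $F_2$.

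Finally I would substitute into the estimate of Proposition~\ref{prop1} and use that $V - \tfrac{n_1}{2} - \tfrac{n_2}{2}$ decreases in each of $n_1$ and $n_2$. Since $n_1, n_2 \geqslant 5$,
$$
\operatorname{vol}(P) \leqslant \left( V - \frac{n_1}{2} - \frac{n_2}{2} \right) \cdot \frac{v_8}{2} \leqslant \left( V - \frac{5}{2} - \frac{5}{2} \right) \cdot \frac{v_8}{2} = (V-5) \cdot \frac{v_8}{2},
$$
which is precisely the asserted bound. The deduction is essentially immediate, so there is no serious obstacle here; the single point deserving a moment's care is confirming the hypothesis $V > 6$ of Proposition~\ref{prop1}, and this follows from the purely combinatorial observation that the six-vertex case admits only triangular faces.
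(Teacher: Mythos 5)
Your proof is correct and takes essentially the same route as the paper, which derives the corollary by simply setting $n_{1}\geqslant 5$ and $n_{2}\geqslant 5$ in Proposition~\ref{prop1}. Your additional verification of the hypothesis $V>6$ (via the observation that the unique six-vertex ideal right-angled polyhedron, the octahedron, has only triangular faces) is a point the paper leaves implicit, and you handle it correctly.
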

	
	\begin{proposition}  {\rm \cite[Theorem~3.3]{EV20} } \label{prop2}  
		If $P$ is an ideal right-angled hyperbolic polyhedron with $V \geqslant 15$ vertices having only  triangular or quadrilateral faces, then 
		$$
		\operatorname{vol} (P) \leqslant  (V-5) \cdot \frac{v_{8}}{2}. 
		$$ 
	\end{proposition}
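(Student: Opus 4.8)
The plan is to first fix the combinatorial type of $P$ and then to reduce to Proposition~\ref{prop1} by a single vertex-removing surgery that manufactures a large face. Since $P$ is $4$-regular we have $2E=4V$, and together with $V=F-2$ the Euler relation gives $F=V+2$ and $E=2V$. Writing $p,q$ for the numbers of triangular and quadrilateral faces, the two identities $p+q=V+2$ and $3p+4q=2E=4V$ force
\[
q=V-6,\qquad p=8 .
\]
Thus \emph{every} such polyhedron has exactly $8$ triangles and $V-6\geq 9$ quadrilaterals. The difficulty is now visible: Corollary~\ref{cor1} does not apply, since no face has at least $5$ sides, while Proposition~\ref{prop1} applied to two quadrilateral faces only yields the weaker bound $(V-4)\,v_8/2$. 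The entire content of the proposition is to gain the extra summand $v_8/2$.

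\emph{Finding a reducible vertex.} At each ideal $4$-valent vertex the four incident faces split into two opposite pairs. Call a vertex \emph{good} if one of these pairs consists of two quadrilaterals. If a vertex is not good, then each of its two opposite pairs contains a triangle, so at least two distinct triangular faces are incident to it. Since the total number of triangle--vertex incidences equals $3p=24$, there are at most $12$ non-good vertices, hence at least $V-12\geq 3$ good vertices. Fix a good vertex $v$ and let $Q_1,Q_3$ be the two opposite quadrilateral faces at $v$.

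\emph{The surgery.} Smooth the vertex $v$ in the one of the two planar reconnections that joins $Q_1$ and $Q_3$: delete $v$ and reconnect its four edges into two disjoint arcs so that $Q_1$ and $Q_3$ merge into a single face while the other two faces at $v$ are separated. The result $G'$ is again a $4$-regular plane graph with $V-1$ vertices and one fewer face, and the merged face has $4+4-2=6$ sides, a hexagon; at least $V-8\geq 7$ quadrilaterals survive untouched. Using that $P$ has no prismatic $3$- or $4$-circuits, one checks that $Q_1$ and $Q_3$ meet only in $v$, so the merge creates no bigon and $G'$ inherits $3$-connectivity and the absence of short prismatic circuits; hence $G'$ is realised by an ideal right-angled polyhedron $P'$ with $V-1>6$ vertices carrying a hexagonal face and a quadrilateral face. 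Proposition~\ref{prop1} applied to $P'$ with $n_1=6$, $n_2=4$ then gives
\[
\operatorname{vol}(P')\leq\Bigl((V-1)-3-2\Bigr)\frac{v_8}{2}=(V-6)\,\frac{v_8}{2}.
\]

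\emph{The volume estimate and the main obstacle.} It remains to compare $P$ with $P'$, and the heart of the argument is the geometric inequality
\[
\operatorname{vol}(P)\leq\operatorname{vol}(P')+\frac{v_8}{2},
\]
which asserts that reinstating the smoothed ideal vertex increases the volume by at most half of a regular ideal octahedron; combined with the displayed bound for $P'$ this yields $\operatorname{vol}(P)\leq(V-5)\,v_8/2$, as required. Proving this local estimate is the main obstacle. I would try to realise the un-smoothing as a controlled modification supported in a neighbourhood of the ideal vertex and bound the inserted volume by $v_8/2$, in the spirit of the per-vertex estimates underlying Theorem~\ref{th101} and Proposition~\ref{prop1}; concretely, exhibiting a decomposition of $P$ into $P'$ together with one piece of volume at most $v_8/2$ near $v$ (a comparison with a half-octahedron) would both close the argument and explain transparently why the extremal volume rate is $v_8/2$ per vertex. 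A secondary technical point, where the hypothesis $V\geq 15$ gives the needed slack among the $\geq 3$ good vertices, is to verify that at least one choice of $v$ keeps $G'$ free of new prismatic $3$- and $4$-circuits so that $P'$ genuinely exists.
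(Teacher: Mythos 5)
Your combinatorial setup ($p_{3}=8$, $p_{4}=V-6\geqslant 9$) is correct and matches the starting point of the actual argument, but your proof has a genuine gap at its central step, and you name it yourself: the inequality $\operatorname{vol}(P)\leqslant \operatorname{vol}(P')+\frac{v_{8}}{2}$ is asserted, not proved. This is not a routine verification that can be deferred. By Andreev's theorem the right-angled realizations of $P$ and $P'$ are each rigid, and they have different combinatorics, so there is no inclusion $P'\subset P$, no deformation of one into the other through right-angled polyhedra, and no evident decomposition of $P$ into $P'$ plus a piece near $v$; the per-vertex constant $v_{8}/2$ in Theorem~\ref{th101} arises as an asymptotic bound on total volume, not as a bound on the volume increment under inserting a single ideal vertex. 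Proving your local estimate would be essentially as hard as the proposition itself (it would likely require angle-structure or Schl\"afli-type deformation arguments that are nowhere sketched). The secondary issue is real too: the existence of a smoothing that keeps $G'$ free of prismatic $3$- and $4$-circuits, so that $P'$ exists at all, is also left unverified. So the proposal, as it stands, is incomplete at exactly the two points where all the difficulty lives.

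The proof in \cite[Theorem~3.3]{EV20} avoids both obstacles by the doubling trick that this paper uses throughout (compare the proofs of Proposition~\ref{prop3} and Proposition~\ref{prop4}). From $p_{3}=8$ and $p_{4}=V-6\geqslant 9$ one first finds three quadrilaterals $F_{1}$, $F_{2}$, $F_{3}$ with $F_{2}$ adjacent to both $F_{1}$ and $F_{3}$: if every quadrilateral had at most one quadrilateral neighbour, each would have at least $3$ sides adjacent to triangles, giving $3p_{4}\leqslant 3p_{3}=24$, i.e.\ $p_{4}\leqslant 8$, contradicting $p_{4}\geqslant 9$ (this is where $V\geqslant 15$ enters). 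Then Proposition~\ref{prop3} with $n_{1}=n_{2}=n_{3}=4$ gives $\operatorname{vol}(P)\leqslant\bigl(V+1-\tfrac{4}{2}-\tfrac{4}{2}-\tfrac{4}{2}\bigr)\frac{v_{8}}{2}=(V-5)\frac{v_{8}}{2}$. The key advantage is that the auxiliary polyhedron in Proposition~\ref{prop3} is $P\cup_{F_{2}}P$, which exists automatically (it is glued from two isometric copies of $P$ along a totally geodesic face meeting its neighbours at right angles) and has exactly twice the volume of $P$, so no new geometric estimate and no Andreev realizability check are ever needed. If you want to salvage your approach, replace the vertex-smoothing surgery by this doubling along $F_{2}$; your counting of ``good'' configurations then becomes the adjacency count above.
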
 
	
	\begin{proposition} \label{prop3} 
		Let $P$ be an ideal right-angled hyperbolic polyhedron with $V$ vertices.  Let  $F_{1}$, $F_ {2}$ and $F_ {3}$ be three faces of $P$ such that $F_ {1}$ is $n_ {1}$-gonal, $F_ {2}$ is $n_ {2}$-gonal and $F_ {3}$ is $n_ {3}$-gonal. Assume that $F_ {2}$ is adjacent to both $F_{1}$ and $F_{3}$. Then
		$$
		\operatorname{vol} (P) \leqslant \left( V + 1 -\frac{n_{1}}{2} - \frac{n_{2}}{2} - \frac{n_{3}}{2}  \right) \cdot \frac{v_8}{2}.
		$$
	\end{proposition}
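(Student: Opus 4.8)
The plan is to reduce Proposition~\ref{prop3} to Proposition~\ref{prop1} by passing to the double of $P$ across the face $F_2$. Let $\Pi$ be the hyperbolic plane containing $F_2$, let $\sigma$ be the reflection of $\mathbb H^3$ in $\Pi$, and set $DP = P \cup \sigma(P)$. Since every dihedral angle of $P$ along an edge of $F_2$ equals $\pi/2$, reflection across $\Pi$ makes each such edge flat: the two faces meeting $F_2$ along it join smoothly into a single face of $DP$, while $F_2$ itself becomes an interior disk. All remaining dihedral angles stay equal to $\pi/2$, and one checks directly that every vertex of $DP$ is again $4$-valent and ideal. Hence $DP$ is again an ideal right-angled hyperbolic polyhedron, and $\operatorname{vol}(DP) = 2\operatorname{vol}(P)$.

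Next I would carry out the combinatorial bookkeeping for $DP$. The $n_2$ vertices of $F_2$ lie on $\Pi$ and are fixed by $\sigma$, while each of the remaining $V - n_2$ vertices of $P$ is doubled; thus $DP$ has $2(V - n_2) + n_2 = 2V - n_2$ vertices. Because $F_2$ is adjacent to $F_1$, the faces $F_1$ and $\sigma(F_1)$ share exactly the edge $F_1 \cap F_2$ (recall that two faces of such a polyhedron meet in at most one edge), so they merge into a single face $\widehat{F_1}$ of $DP$ with $2(n_1 - 2) + 2 = 2n_1 - 2$ vertices. Likewise the adjacency of $F_2$ and $F_3$ produces a merged face $\widehat{F_3}$ with $2n_3 - 2$ vertices. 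Since $F_1, F_3 \neq F_2$, neither lies on $\Pi$, so $\widehat{F_1}$ and $\widehat{F_3}$ are two distinct faces of $DP$, each having at least $2\cdot 3 - 2 = 4$ vertices.

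Finally I would apply Proposition~\ref{prop1} to $DP$ with the faces $\widehat{F_1}$ and $\widehat{F_3}$ (the hypotheses $V(DP) > 6$ and $2n_i - 2 \geq 4$ being immediate), obtaining $2\operatorname{vol}(P) = \operatorname{vol}(DP) \leq \big((2V - n_2) - (n_1 - 1) - (n_3 - 1)\big)\tfrac{v_8}{2} = (2V - n_1 - n_2 - n_3 + 2)\tfrac{v_8}{2}$, and dividing by $2$ yields exactly the claimed estimate. The only genuinely delicate point, which I would write out most carefully, is the vertex count of the merged faces $\widehat{F_1}$ and $\widehat{F_3}$: this is precisely where the adjacency hypothesis is used, since it guarantees that $F_1$ and $F_3$ each meet $\Pi$ in a whole edge (two vertices) rather than in a single vertex or not at all, and it is this that produces the sizes $2n_1 - 2$ and $2n_3 - 2$ and hence the correction term $+1$ in the bound.
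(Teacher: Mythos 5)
Your proposal is correct and follows essentially the same route as the paper: doubling $P$ along $F_2$ to get an ideal right-angled polyhedron with $2V-n_2$ vertices, observing that $F_1$ and $F_3$ merge with their mirror copies into $(2n_1-2)$- and $(2n_3-2)$-gonal faces, and applying Proposition~\ref{prop1} before dividing by $2$. Your additional checks (vertices of $F_2$ surviving as $4$-valent ideal vertices, distinctness of the merged faces, the hypotheses $2n_i-2\geqslant 4$) are details the paper leaves implicit, but the argument is identical.
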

	
	\begin{proof}
		Let us consider union $P' = P \cup_{F_{2}} P$ of two copies of $P$ along $F_{2}$. Polyhedron $P'$  is ideal right-angled and has $V' = 2V - n_{2}$ vertices. Its volume is twice volume of $P$. Remark that  $P'$ has a face $F_{11}$ which is a union of two copies of $F_{1}$ along a common edge. Hence $F_{11}$ is $(2n_1-2)$-gonal. Similar, $P'$ has a face $F_{13}$ which is a union of two copies of $F_{3}$ along a common edge. Hence $F_{13}$ is $(2n_3 - 2)$-gonal.  Applying Proposition~\ref{prop1} to polyhedron $P'$ and its faces $F_{11}$ and $F_{13}$, we obtain 
		$$
		2 \operatorname{vol} (P) = \operatorname{vol} (P') \leqslant \left( (2 V - n_{2}) - \frac{2n_1-2}{2} - \frac{2n_3-2}{2} \right) \cdot \frac{v_{8}}{2}.
		$$
		Dividing both sides of the inequality by $2$, we get the result.  
	\end{proof}	
	
	\begin{corollary} \label{cor2} 
		If $P$ is an ideal right-angled hyperbolic polyhedron with $V$ vertices that has at least one $k$-gonal face, $k \geqslant 6$, then  
		$$
		\operatorname{vol} (P) \leqslant (V-5) \cdot  \frac{v_{8}}{2}. 
		$$ 
	\end{corollary}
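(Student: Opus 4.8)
The plan is to obtain the bound from a single application of Proposition~\ref{prop3}, using the given large face as the ``middle'' face and any two of its neighbours as the two outer faces. The key observation is that a face with at least $6$ sides already contributes $n_2/2 \geqslant 3$ to the estimate, so no case analysis or appeal to Proposition~\ref{prop1} is needed; the inequality $k/2 \geqslant 3$ carries the whole argument.

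First I would fix a $k$-gonal face $F$ of $P$ with $k \geqslant 6$ and let it play the role of $F_2$ in Proposition~\ref{prop3}, so that $n_2 = k$. The $1$-skeleton of $P$ is the graph of a $3$-polytope, hence $3$-connected and planar, so two distinct faces share at most one edge and the faces meeting $F$ along its $k$ edges are pairwise distinct. Since $k \geqslant 6$, I can choose two distinct neighbours $F_1$ and $F_3$ of $F$; writing $n_1$ and $n_3$ for their numbers of sides, we have $n_1, n_3 \geqslant 3$ because they are faces of a polyhedron.

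Next I would feed $F_1$, $F_2 = F$, $F_3$ into Proposition~\ref{prop3}: by construction $F_2$ is adjacent to both $F_1$ and $F_3$, so the proposition gives
$$
\operatorname{vol} (P) \leqslant \left( V + 1 - \frac{n_1}{2} - \frac{n_2}{2} - \frac{n_3}{2} \right) \cdot \frac{v_8}{2}.
$$
Substituting $n_1 \geqslant 3$, $n_3 \geqslant 3$ and $n_2 = k \geqslant 6$ yields
$$
V + 1 - \frac{n_1}{2} - \frac{n_2}{2} - \frac{n_3}{2} \leqslant V + 1 - \frac{3}{2} - 3 - \frac{3}{2} = V - 5,
$$
which is exactly the claimed estimate.

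The only genuine point to verify is that $F_1$ and $F_3$ can be taken distinct, so that Proposition~\ref{prop3} applies to three honest faces (equivalently, so that the two enlarged faces it produces in the doubled polyhedron $P \cup_{F_2} P$ are distinct); this is immediate from $3$-connectivity of the $1$-skeleton, so I do not expect any real obstacle. In short, recognizing that Proposition~\ref{prop3} alone suffices, rather than splitting into the cases ``$P$ has a second face with at least $4$ sides'' and ``all other faces are triangles,'' is the main step.
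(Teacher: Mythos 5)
Your proof is correct and is essentially identical to the paper's own argument: the paper likewise proves Corollary~\ref{cor2} by a single application of Proposition~\ref{prop3} with the $k$-gonal face ($k \geqslant 6$) in the middle position and two adjacent faces with $n_1, n_3 \geqslant 3$, yielding $V + 1 - \tfrac{3}{2} - \tfrac{6}{2} - \tfrac{3}{2} = V - 5$; there is no case analysis in the paper either. Your added remark on the distinctness of $F_1$ and $F_3$ (via $3$-connectivity of the $1$-skeleton) is a small point the paper leaves implicit.
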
	
	
	\begin{proof}
		Consider three faces $F_{1}$, $F_{2}$ and $F_{3}$ of $P$, where one of them, say $F_{2}$, is $k$-gonal, $k \geqslant 6$, and $F_{1}$, $F_{3}$ are adjacent to $F_{2}$. Then we can apply Proposition~\ref{prop3} for the triple $(n_{1}, n_{2}, n_{3})$, where $n_{1} \geqslant 3$, $n_{2} \geqslant 6$ and $n_{3} \geqslant 3$. Then 
		\begin{eqnarray*}
			\operatorname{vol} (P) & \leqslant & \left(V + 1 -\frac{n_{1}}{2}-\frac{n_{2}}{2}-\frac{n_{3}}{2}\right) \cdot \frac{v_8}{2} \\ 
			&  \leqslant & \left(V + 1 - \frac{3}{2} - \frac{3}{2}-\frac{6}{2}\right) \cdot \frac{v_{8}}{2} = (V-5) \cdot \frac{v_{8}}{2},  
		\end{eqnarray*} 
		and the statement is proved.
	\end{proof}	
	
	\begin{proposition} \label{prop4} 
		Let $P$ be an ideal right-angled polyhedron with $V \geqslant 16$ vertices.  Assume that  $P$ has one pentagonal face and all other faces are triangles or quadrilaterals. Then 
		$$
		\operatorname{vol} (P) \leqslant (V-5) \cdot \frac{v_{8}}{2}. 
		$$ 
	\end{proposition}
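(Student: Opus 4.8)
The plan is to reduce everything to the tools already in hand, splitting into cases according to how the unique pentagon sits among its neighbours. As a preliminary I would record the face vector. Since $P$ is $4$-regular, Euler's formula $V=F-2$ gives $f_3+f_4+1=V+1$, while $\sum_i n_i=2E=4V$ gives $3f_3+4f_4+5=4V$, where $f_3,f_4$ denote the numbers of triangular and quadrilateral faces. Solving yields $f_4=V-8$ and $f_3=9$. Thus quadrilaterals are abundant (there are $V-8\geqslant 8$ of them), and the only delicate configuration is when the pentagon happens to be surrounded entirely by triangles, which would consume $5$ of the $9$ available triangles.

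My first attempt would be to apply Proposition~\ref{prop3} directly, taking the pentagon as the middle face $F_2$ (so $n_2=5$) and two of its neighbours as $F_1,F_3$. If at least one neighbour, say $F_1$, is a quadrilateral, then choosing any other neighbour $F_3$ (which satisfies $n_3\geqslant 3$) gives
$$
\operatorname{vol}(P)\leqslant\left(V+1-\frac{4}{2}-\frac{5}{2}-\frac{n_3}{2}\right)\cdot\frac{v_8}{2}\leqslant (V-5)\cdot\frac{v_8}{2},
$$
so this case is settled. The obstruction is precisely the remaining case, where all five neighbours of the pentagon are triangles: then necessarily $n_1=n_3=3$ in any admissible triple, and Proposition~\ref{prop3} yields only the weaker bound $\left(V-\tfrac{9}{2}\right)\cdot\frac{v_8}{2}$, which does not suffice. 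Overcoming this case is the crux of the argument.

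To handle it I would double $P$ across the pentagon. Let $P'=P\cup_{\Pi}P$ be the union of two copies of $P$ glued along the pentagonal face $\Pi$; since $P$ is right-angled, the two copies meet flatly along the edges of $\Pi$, so $P'$ is again an ideal right-angled polyhedron, with $V'=2V-5$ vertices and $\operatorname{vol}(P')=2\operatorname{vol}(P)$. The key point is to identify the faces of $P'$: every face of $P$ not meeting $\Pi$ reappears as two disjoint copies, hence stays triangular or quadrilateral, while each triangular neighbour of $\Pi$ merges with its mirror image across the shared edge into a $(2\cdot 3-2)=4$-gon. Because in this case \emph{no} quadrilateral touches $\Pi$, no larger face is created, so every face of $P'$ is a triangle or a quadrilateral. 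As $V\geqslant 16$ forces $V'=2V-5\geqslant 27\geqslant 15$, Proposition~\ref{prop2} applies to $P'$ and gives
$$
2\operatorname{vol}(P)=\operatorname{vol}(P')\leqslant (V'-5)\cdot\frac{v_8}{2}=(2V-10)\cdot\frac{v_8}{2},
$$
whence $\operatorname{vol}(P)\leqslant(V-5)\cdot\frac{v_8}{2}$, completing the proof. The step I expect to require the most care is verifying that the doubled polyhedron has no face of size $\geqslant 5$; this relies exactly on having first disposed of the case where a quadrilateral is adjacent to the pentagon, since doubling such a quadrilateral across $\Pi$ would produce a hexagon and defeat the application of Proposition~\ref{prop2}.
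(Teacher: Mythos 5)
Your proof is correct, and it takes a genuinely different route from the paper's. The paper does not treat the pentagon's neighbourhood specially: from $F=V+2$ it computes $p_3=9$ and $p_4\geqslant 8$, then shows by a counting argument (if every quadrilateral or pentagonal face had at most one quadrilateral or pentagonal neighbour, those faces would consume at least $4+3p_4>27$ triangle sides, while the nine triangles supply only $27$) that some sequentially adjacent triple $F_1,F_2,F_3$ exists with all $n_i\geqslant 4$, and then applies Proposition~\ref{prop3} once. You instead dichotomize on the unique pentagon $\Pi$: if it has a quadrilateral neighbour, Proposition~\ref{prop3} with $(n_1,n_2,n_3)=(4,5,n_3)$, $n_3\geqslant 3$, already yields $(V-5)\cdot\frac{v_8}{2}$; if all five neighbours are triangles, you double across $\Pi$ and invoke Proposition~\ref{prop2}. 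The doubling bookkeeping is exactly the construction in the paper's own proof of Proposition~\ref{prop3} ($V'=2V-n_2$; a neighbour merges with its mirror into a $(2n-2)$-gon), and your key verification is sound: the pentagon is absorbed into the interior, its five triangular neighbours become quadrilaterals, every other face merely duplicates, so $P'=P\cup_{\Pi}P$ has only triangular and quadrilateral faces and $V'=2V-5\geqslant 27\geqslant 15$ vertices, as Proposition~\ref{prop2} requires. Your route buys something concrete: it dispenses with the counting lemma and needs only $V\geqslant 10$ (so that $V'\geqslant 15$), whereas the paper's counting genuinely uses $V\geqslant 16$ to force $p_4\geqslant 8$; the same dichotomy would even transfer to the compact setting of Proposition~\ref{prop8} (a hexagonal neighbour of the heptagon feeds Proposition~\ref{prop6} with $(6,7,\geqslant 5)$, while all-pentagonal neighbours allow doubling into a polyhedron with only pentagonal and hexagonal faces, to which Proposition~\ref{prop7} applies), plausibly with a lower threshold than the paper's $V\geqslant 48$. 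What the paper's argument buys in exchange is a single uniform template that is reused verbatim in Section~\ref{sec3}. Two small blemishes, neither affecting correctness: your Euler relation should read $f_3+f_4+1=F=V+2$, not $V+1$ (the face vector $f_3=9$, $f_4=V-8$ that you state is nevertheless the correct one), and since neither count is actually used in your two cases, the preliminary paragraph could be deleted.
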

	
	\begin{proof} 
		Denote by $F$ number of faces and by $p_{k}$ the number of $k$-gonal faces ($k \geqslant 3$) of polyhedron $P$. Then from $F = V+2$ we obtain 
		$$
		p_{3} = 8 + \sum_{k\geq 5} (k-4) p_{k}.
		$$ 
		By the assumption, $P$ has one pentagonal face, and therefore, nine triangular faces. Hence $F = 10 + p_{4} \geqslant 18$ and $p_{4} \geqslant 8$. 
		
		Observe  that $P$ contains three quadrilateral or pentagonal faces $F_{1}$, $F_{2}$, $F_{3}$ such that $F_{2}$ is adjacent to $F_{1}$ and $F_{3}$ both. By a contradiction, assume that any quadrilateral or pentagonal face of $P$ has at most one adjacent quadrilateral or pentagonal face. Hence any quadrilateral face has at least 3 adjacent triangular faces and the pentagonal face has at least 4 adjacent triangular faces. Therefore, totally there are at least $4 + 3 p_{4}$ sides of triangles which are adjacent to sides of a quadrilateral and pentagonal faces. Since the total number of triangular faces is $9$, we get $4 + 3 p_{4} \leqslant 27$. Since  $p_{4} \geqslant 8$, the inequality is not satisfied. 
		
		This contradiction implies that there is a triple of sequentially adjacent quadrilateral or pentagonal faces $F_{1}$, $F_{2}$, $F_{3}$, where $F_{2}$ is adjacent to $F_{1}$ and $F_{3}$. Applying Proposition~\ref{prop3} we get 
		\begin{eqnarray*}
			\operatorname{vol} (P) & \leqslant & \left(V + 1 -\frac{n_{1}}{2}-\frac{n_{2}}{2}-\frac{n_{3}}{2}\right) \cdot \frac{v_8}{2} \\ 
			& \leqslant & \left(V+ 1 - \frac{4}{2} - \frac{4}{2}-\frac{4}{2}\right) \cdot \frac{v_{8}}{2} = (V-5) \cdot \frac{v_{8}}{2}, 
		\end{eqnarray*}
		and the statement is proved.  
	\end{proof} 	
	
	\begin{table}[ht] \caption{Ideal right-angled hyperbolic polyhedra.} \label{table1} 
		\begin{tabular}{|r|r|r|r|r|} \hline
			$\#$  of vertices & $\#$ of polyhedra & $\#$ of volumes & $\min$ volume & $\max$ volume \\ \hline \hline 
			6 & 1 & 1 & 3.663863  & 3.663863\\ \hline 
			7 & 0 & 0 & - & -\\ \hline
			8 & 1 & 1 & 6.023046 & 6.023046 \\ \hline 
			9 & 1 & 1 & 7.327725 & 7.327725 \\ \hline
			10 & 2 & 2 & 8.137885  & 8.612415  \\ \hline
			11 & 2 & 2 & 9.686908 & 10.149416   \\ \hline
			12 & 9 & 7 &  10.149416 & 12.046092  \\ \hline
			13 & 11 & 7 &  11.801747 & 13.350771   \\ \hline
			14 & 37 & 17 & 12.106298 & 14.832681   \\ \hline
			15 & 79 & 31 & 13.813278 & 16.331571   \\ \hline
		\end{tabular}
	\end{table}
	
	Summarizing Proposition~\ref{prop2} (the case if there is no $k$-gonal faces for $k \geqslant 5$), Corollary~\ref{cor2} (the case if there is at least one $k$-gonal face, $k \geqslant 6$), Proposition~\ref{prop4} (the case if there is one pentagonal face and other faces are $k$-gonal, $k \leqslant 4$), and Corollary~\ref{cor1} (the cases if there are two faces such that each of them has at least $5$ edges) we have the statement for polyhedra with $V \geqslant 16$ vertices.  For polyhedra $P$ with $9 \leqslant V \leqslant 15$ vertices the inequality 
	$\operatorname{vol} (P) \leqslant (V - 5) \cdot \frac{v_{8}}{2}$
	holds by direct computation~\cite{EV20}, see Table~\ref{table1}. Proof of Theorem~\ref{th102} is completed. 
	
	\section{Proof of Theorem~\ref{th104}}  \label{sec3}
	
	The proof of Theorem~\ref{th104} uses the same method as the proof of Theorem~\ref{th102} presented in previous section. But with the difference that instead of ideal right-angled polyhedra with 4-regular 1-skeletons we consider compact right-angled hyperbolic polyhedra with 3-regular 1-skeletons. Therefore, if $P$ is a compact right-angled hyperbolic polyhedron with $V$ vertices and $F$ faces, then by the Euler formula $2F = V + 4$. 
	
	The upper estimate from Theorem~\ref {th102} was generalized in~\cite{EV21} as in Proposition~\ref{prop5}, Proposition~\ref{prop6} and Proposition~\ref{prop7} below.  For the reader's convenience we present these statements with proofs. 
	
	\begin{proposition} {\rm \cite[Theorem~3.4]{EV21}} \label{prop5} 
		Let $P$ be a compact right-angled hyperbolic polyhedron with $V$ vertices. Let $F_{1}$ and $F_ {2}$ be two faces of $P$ such that $F_ {1}$ is $n_ {1}$-gonal, and $F_ {2}$ is $n_{2}$-gonal, where $n_{1}, n_{2} \geqslant 6$. Then 
		$$
		\operatorname{vol} (P) \leqslant \left( V - n_{1} - n_{2} \right) \cdot \frac{5v_3}{8}.
		$$
	\end{proposition}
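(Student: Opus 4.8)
The plan is to reduce the estimate to Atkinson's upper bound (Theorem~\ref{th103}) by building a long tower of reflected copies of $P$ along the two large faces, then dividing by the number of copies and letting it go to infinity. This is the compact counterpart of the reflection argument behind Proposition~\ref{prop1}; the one genuinely new point is a vertex count that is special to the $3$-valent situation.

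First I would fix the supporting planes $\Pi_1,\Pi_2$ of $F_1,F_2$ and let $r_1,r_2$ be the reflections in them. Reflecting $P$ in $\Pi_1$, then the image in the image of $\Pi_2$, and so on, produces a chain $C_k=P^{(1)}\cup\dots\cup P^{(k)}$ of $k$ isometric copies of $P$ glued alternately along $F_1$- and $F_2$-type faces, so that $\operatorname{vol}(C_k)=k\operatorname{vol}(P)$. The step to emphasise is the effect of one gluing on the vertex set. Because $P$ is right-angled, the two faces meeting a glued $n$-gon do so at $\pi/2$ and hence become coplanar with their mirror images; thus each of the $n$ vertices of a glued face turns into a smooth interior point of an edge and is no longer a vertex of $C_k$. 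Consequently every gluing along an $n_i$-gon removes $2n_i$ vertices (the $n_i$ identified pairs all disappear), as opposed to the $4$-valent ideal case, where such vertices survive and only $n_i$ are lost. With $\lceil(k-1)/2\rceil$ gluings along $F_1$ and $\lfloor(k-1)/2\rfloor$ along $F_2$ this gives
$$
V_k=kV-2n_1\left\lceil\tfrac{k-1}{2}\right\rceil-2n_2\left\lfloor\tfrac{k-1}{2}\right\rfloor.
$$

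Applying Theorem~\ref{th103} to the compact right-angled polyhedron $C_k$ gives $k\operatorname{vol}(P)=\operatorname{vol}(C_k)<(V_k-10)\cdot\tfrac{5v_3}{8}$. Dividing by $k$ and letting $k\to\infty$, the term $10/k$ vanishes while $\tfrac1k\lceil(k-1)/2\rceil$ and $\tfrac1k\lfloor(k-1)/2\rfloor$ both tend to $\tfrac12$, so $V_k/k\to V-n_1-n_2$ and the strict inequalities pass to the limit, yielding $\operatorname{vol}(P)\leqslant(V-n_1-n_2)\cdot\tfrac{5v_3}{8}$. It is exactly the $2n_i$ (rather than $n_i$) lost per gluing that produces the full coefficient $n_1+n_2$, which is why the compact bound is genuinely stronger than what a naive reflection would give.

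The point needing the most care — and the main obstacle — is to confirm that $C_k$ is an honest compact right-angled hyperbolic polyhedron of volume $k\operatorname{vol}(P)$, not merely an abstract gluing. Checking that gluing two right-angled polyhedra along a face keeps all dihedral angles equal to $\pi/2$ (the seam angles add up to $\pi$ and flatten) shows $C_k$ is abstractly right-angled, hence realizable by Andreev's theorem~\cite{A70}; the real issue is that this realization must coincide with the geometric union of reflected copies, which requires the copies not to overlap. This holds precisely when $\langle r_1,r_2\rangle$ is infinite dihedral, i.e. when $\Pi_1$ and $\Pi_2$ are ultraparallel. I would therefore verify that two non-adjacent faces of a compact right-angled polyhedron always have ultraparallel supporting planes (adjacent faces meet at $\pi/2$), so the construction runs whenever $F_1$ and $F_2$ share no edge; the remaining case of adjacent $F_1,F_2$ I would reduce to this one by a preliminary reflection that separates them.
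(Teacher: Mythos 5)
Your Case (1) is exactly the paper's Case (1): the same chain of copies glued alternately along $F_1$- and $F_2$-type faces, the same key vertex count (each gluing along an $n_i$-gon kills all $2n_i$ identified vertices because the $3$-valent right-angled links flatten), the same application of Atkinson's bound to the chain, division by $k$, and passage to the limit. Your extra care about embeddedness (ultraparallel supporting planes, infinite dihedral reflection group) is legitimate and in fact more scrupulous than the paper, which treats the glued object combinatorially and invokes its realization as a right-angled polyhedron; the disjointness of supporting planes of non-adjacent faces is indeed a standard fact for compact non-obtuse polyhedra in Andreev's theory.

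The genuine gap is the adjacent case, which you dispatch in one sentence: ``reduce to this one by a preliminary reflection that separates them.'' The literal reading --- double $P$ across some third face $G$ adjacent to neither $F_1$ nor $F_2$, so that the copy of $F_1$ in one half and the copy of $F_2$ in the other half become non-adjacent --- fails numerically. The double $P'=P\cup_{G}P$ has $V'=2V-2n_G$ vertices with only $n_G\geqslant 5$ guaranteed, and Case (1) applied to an $n_1$-gon and an $n_2$-gon of $P'$ gives
$$
2\operatorname{vol}(P)\leqslant\left(2V-2n_G-n_1-n_2\right)\cdot\frac{5v_3}{8},
\qquad\text{i.e.}\qquad
\operatorname{vol}(P)\leqslant\left(V-n_G-\tfrac{n_1+n_2}{2}\right)\cdot\frac{5v_3}{8},
$$
which is weaker than the claim unless $n_G\geqslant\frac{n_1+n_2}{2}\geqslant 6$ --- not guaranteed. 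The paper's Case (2) instead reflects in $F_1$ \emph{itself}: in $P'=P\cup_{F_1}P$ the face $F_2$ doubles across the common edge to a $(2n_2-4)$-gon $F_2'$, and since every face of a compact right-angled polyhedron is at least pentagonal, a face $F_3$ adjacent to $F_1$ but not to $F_2$ (it exists because $F_1$ has enough sides) doubles to an at least $6$-gonal face $F_3'$ non-adjacent to $F_2'$. Case (1) applied to $F_2'$ and $F_3'$ then yields $2\operatorname{vol}(P)\leqslant\left(2V-2n_1-(2n_2-4)-6\right)\cdot\frac{5v_3}{8}$, i.e. $\operatorname{vol}(P)\leqslant\left(V-n_1-n_2-1\right)\cdot\frac{5v_3}{8}$, slightly stronger than needed. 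What your sketch misses is that the preliminary reflection must be taken in one of the two distinguished faces, so that the doubled side count $2n_2-4\geqslant n_2+2$ of $F_2'$, together with the pentagon lower bound for the auxiliary face, pays for the $n_1$ sides lost when $F_1$ becomes interior; a reflection that merely ``separates'' the two faces cannot recover the full coefficient $n_1+n_2$.
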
 
	\begin{proof}
		Case (1). Let us assume that faces $F_ {1}$ and $F_ {2}$ are not adjacent. We construct a family $\{P_{i}\}$ of right-angled polyhedra by induction, attaching on each step a~copy of the polyhedron $P$. Put $P_ {1} = P$. Define $P_{2}=P_ {1} \cup_{F_{1}}P_{1}$ by identifying two copies of the polyhedron $P_ {1}$ along the face $F_ {1}$. It has $V_ {2} = 2V-2n_ {1}$ vertices. Indeed, faces of $P_{1}$ adjacent to $F_{1}$ form dihedral angles $\pi/2$ with $F_{1}$, so vertices of $F_{1}$ aren't vertices in the glued polyhedron $P_{2}=P_ {1} \cup_{F_{1}}P_{1}$. For the volumes we have $\operatorname{vol} (P_{2}) = 2 \operatorname{vol} (P)$. The polyhedron $P_{2}$ has at the least one face isometric to $F_{2}$. Attaching the polyhedron $P$ to the polyhedron $P_{2}$ along this face we get  $P_{3} = P_{2} \cup_{F_{2}} P = P \cup_{F_{1}} P \cup_{F_{2}} P$. Evidently, $P_{3}$ is a compact right-angled polyhedron with $V_{3} = 3V - 2n_{1} - 2n_{2}$ vertices and volume $\operatorname{vol} (P_{3}) = 3 \operatorname{vol} (P)$. Continuing the process of attaching  $P$ alternately through the faces isometric to $F_{1}$ and $F_{2}$, we get the polyhedron $P_{2k+1} = P_{2k-1} \cup_{F_{1}} P \cup_{F_{2}} P$, which is a compact right-angled polyhedron with $V_{2k+1} = (2k+1) V - 2 k n_{1} - 2 k n_{2}$ vertices and volume $\operatorname{vol} (P_{2k+1}) = (2k+1) \operatorname{vol} (P)$. Now let us apply the upper bound from Theorem~\ref{th103} to polyhedron $P_{2k+1}$:
		$$
		(2k+1) \operatorname{vol} (P) < \left( (2k+1) V - 2 k n_{1} - 2 k n_{2} - 10 \right) \frac{5 v_{3}}{8}.  
		$$
		Dividing both sides of the inequality by $(2k+1)$ and passing to the limit as $k \to \infty $, we obtain the required inequality.
		
		Case (2). Let us assume that faces $F_{1}$ and $F_{2}$ are adjacent. Put $P' =P \cup_{F_{1}} P$. Then polyhedron $P'$ has $V'=2V-2n_{1}$ vertices and $\operatorname{vol} (P') = 2 \operatorname{vol} (P)$. By construction, the polyhedron $P'$ has a face $F'_{2}$, which is a union of two copies of $F_{2}$ along a common edge. Hence, $F'_{2}$ is a $(2n_{2}-4)$-gon. Since the face $F_1$ has at least $5$ sides, there is a face $F_{3}$ in $P$ adjacent to $F_{1}$, but not adjacent to $F_{2}$. As a result of attaching $P$ along $F_{1}$, the face $F_{3}$ will turn into a face $F'_{3}$ in $P'$ that has at least $6$ sides.  Thus, in $P'$ there is a pair of non-adjacent faces $F'_{2}$ and $F'_{3} $, each of which has at least $6$ sides. Applying case  (1)  for the polyhedron $P'$ and its non-adjacent faces $F'_{2}$ and $F'_{3}$ we get:
		$$
		2 \operatorname{vol} (P) \leqslant \left( V' - (2n_{2} - 4) - 6 \right) \frac{5 v_{3}}{8} =  \left( 2V-2n_{1}-2n_{2} - 2 \right) \frac{5 v_{3}}{8}
		$$
		and hence, 
		$ \displaystyle \operatorname{vol} (P) < \left( V-n_{1}-n_2 \right) \frac{5 v_{3}}{8}$. 
	\end{proof}

	Considering $n_{1}\geqslant 7$ and $n_{2} \geqslant 7$ in Proposition~\ref{prop5} we immediately get the following result. 
	
		\begin{corollary} \label{cor3} 
		If $P$ is a compact right-angled hyperbolic polyhedron with $V$ vertices having two faces such that each of them is at least $7$-gonal. Then 
		$$
		\operatorname{vol} (P) \leqslant (V-14) \cdot \frac{5v_{3}}{8}.
		$$ 
	\end{corollary}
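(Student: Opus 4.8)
The plan is to derive this statement as an immediate specialization of Proposition~\ref{prop5}, exactly as the sentence preceding it announces. First I would observe that the two faces guaranteed by the hypothesis have numbers of sides $n_{1} \geqslant 7$ and $n_{2} \geqslant 7$; in particular both satisfy $n_{1}, n_{2} \geqslant 6$, so Proposition~\ref{prop5} applies verbatim to this pair of faces and yields $\operatorname{vol}(P) \leqslant (V - n_{1} - n_{2}) \cdot \frac{5v_{3}}{8}$.

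The only remaining step is a monotonicity observation. The quantity $V - n_{1} - n_{2}$ decreases as either $n_{1}$ or $n_{2}$ grows, and the multiplicative constant $\frac{5v_{3}}{8}$ is positive, so the right-hand side is largest when $n_{1}$ and $n_{2}$ take their minimal admissible values $n_{1} = n_{2} = 7$. Substituting these gives $(V - 14) \cdot \frac{5v_{3}}{8}$, which therefore dominates the bound coming from Proposition~\ref{prop5} for every admissible choice of $n_{1}, n_{2}$, and this is precisely the claimed inequality.

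I do not expect any genuine obstacle here: the corollary is a direct restriction of Proposition~\ref{prop5} to the case of two at-least-$7$-gonal faces. The one point that warrants care is keeping the direction of the monotonicity straight. Since we want an upper bound valid for \emph{every} pair of $\geqslant 7$-gonal faces, we replace the sum $n_{1} + n_{2}$ by its smallest possible value $14$, which enlarges (weakens) the right-hand side rather than strengthening it, so the resulting bound is legitimately an upper estimate for all such polyhedra.
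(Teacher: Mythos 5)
Your proposal is correct and coincides with the paper's own derivation: the authors obtain Corollary~\ref{cor3} exactly by setting $n_{1}, n_{2} \geqslant 7$ in Proposition~\ref{prop5} and using the monotonicity of $V - n_{1} - n_{2}$ in $n_{1}$ and $n_{2}$. Your added care about the direction of the monotonicity is exactly the right (and only) point to check.
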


	\begin{proposition} {\rm \cite[Corollary~3.2]{EV21} } \label{prop6} 
		Let $P$ be a compact right-angled hyperbolic polyhedron with $V$ vertices. Let $F_{1}$, $F_ {2}$ and $F_ {3}$ be three faces of $P$ such that $F_ {1}$ is $n_ {1}$-gonal, $F_ {2}$ is $n_ {2}$-gonal and $F_ {3}$ is $n_ {3}$-gonal.  Assume that $F_ {2}$ is adjacent to both $F_{1}$ and $F_{3}$. Then
		$$
		\operatorname{vol} (P) \leqslant \left(V - n_{1} - n_{2} - n_{3} + 4  \right) \cdot \frac{5v_3}{8}.
		$$
	\end{proposition}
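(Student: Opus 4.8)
The plan is to mimic the doubling argument used in Proposition~\ref{prop3} for the ideal case, but now using Proposition~\ref{prop5} (the compact analogue of Proposition~\ref{prop1}) as the base result to which we reduce. Since $F_2$ is adjacent to both $F_1$ and $F_3$, I would form the doubled polyhedron $P' = P \cup_{F_2} P$ by gluing two copies of $P$ along the face $F_2$. Because all dihedral angles are right angles, $P'$ is again a compact right-angled hyperbolic polyhedron, its volume is $2\operatorname{vol}(P)$, and gluing along the $n_2$-gon $F_2$ destroys its $n_2$ vertices, so $V' = 2V - n_2$.

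The key geometric observation is how the faces $F_1$ and $F_3$ transform under this gluing. Each of $F_1$ and $F_3$ is adjacent to $F_2$ along exactly one edge. When we double along $F_2$, the two copies of $F_1$ meet along that shared edge and fuse into a single face $F_{11}$; since we remove one edge from each copy and identify along it, $F_{11}$ is a $(2n_1 - 2)$-gon. Similarly the two copies of $F_3$ fuse into a $(2n_3 - 2)$-gon $F_{13}$. I would then note that $F_{11}$ and $F_{13}$ each have at least $6$ sides (as $n_1, n_3 \geq 4$ already gives $2n_i - 2 \geq 6$, and even $n_i \geq 3$ suffices at the boundary), so Proposition~\ref{prop5} applies to the pair $F_{11}, F_{13}$ in $P'$.

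Applying Proposition~\ref{prop5} to $P'$ with these two faces yields
\begin{equation*}
2\operatorname{vol}(P) = \operatorname{vol}(P') \leqslant \bigl( (2V - n_2) - (2n_1 - 2) - (2n_3 - 2) \bigr) \cdot \frac{5v_3}{8}.
\end{equation*}
Simplifying the bracket gives $2V - 2n_1 - n_2 - 2n_3 + 4$, and dividing both sides by $2$ produces
\begin{equation*}
\operatorname{vol}(P) \leqslant \Bigl( V - n_1 - \frac{n_2}{2} - n_3 + 2 \Bigr) \cdot \frac{5v_3}{8}.
\end{equation*}
Here I anticipate the main obstacle: this bound does not immediately match the target $(V - n_1 - n_2 - n_3 + 4)\frac{5v_3}{8}$, so I must check whether the stated inequality is actually the one that follows, or whether a subtlety in the vertex/edge count of the doubling is being used. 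The discrepancy is $\frac{n_2}{2} - 2$ in the coefficient, which vanishes precisely when $n_2 = 4$; since compact right-angled polyhedra have faces with at least five sides, the honest doubling gives a slightly different constant, and reconciling it with Proposition~\ref{prop5}'s conventions (whether $F_2$ itself or its neighbours were used to absorb vertices) is where I would focus care. The cleanest route is to verify directly that the $(2n_1-2)$- and $(2n_3-2)$-gon counts, together with $V' = 2V - n_2$, feed into Proposition~\ref{prop5} to produce exactly the claimed bound after division by $2$, matching the structure of the proof of Proposition~\ref{prop3} in the ideal setting.
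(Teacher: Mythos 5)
Your overall strategy (double along $F_2$ and apply Proposition~\ref{prop5} to the two fused faces) is exactly the paper's, but you carried over the \emph{ideal-case} vertex counts from Proposition~\ref{prop3}, and that is where the argument breaks. An ideal right-angled polyhedron has a 4-regular 1-skeleton, so when you glue two copies along a face its vertices survive as vertices of the double ($V' = 2V - n_2$) and two copies of an adjacent $n_1$-gon fuse into a $(2n_1-2)$-gon. A compact right-angled polyhedron has a 3-regular 1-skeleton: at each vertex of $F_2$ the two dihedral angles of $\pi/2$ meeting along the edges transverse to $F_2$ add up to $\pi$ after gluing, so \emph{every vertex of $F_2$ ceases to be a vertex} of $P' = P \cup_{F_2} P$, giving $V' = 2V - 2n_2$. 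For the same reason, when the two copies of $F_1$ fuse along their common edge with $F_2$, not only does that edge disappear but both of its endpoints stop being vertices of the fused face, which is therefore a $(2n_1 - 4)$-gon, not a $(2n_1-2)$-gon; likewise for $F_3$. (Since all faces of a compact right-angled polyhedron have $n_i \geqslant 5$, as you yourself noted, the fused faces have $2n_i - 4 \geqslant 6$ sides, so Proposition~\ref{prop5} does apply.)

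With the correct counts the discrepancy you flagged disappears: Proposition~\ref{prop5} gives
$$
2\operatorname{vol}(P) = \operatorname{vol}(P') \leqslant \left( (2V - 2n_2) - (2n_1 - 4) - (2n_3 - 4) \right) \cdot \frac{5v_3}{8},
$$
and dividing by $2$ yields exactly $\operatorname{vol}(P) \leqslant \left(V - n_1 - n_2 - n_3 + 4\right) \cdot \frac{5v_3}{8}$, which is the paper's proof. So the gap is not in the plan but in a concrete step: you used 4-regular gluing combinatorics in the 3-regular setting, obtained the weaker bound $\left(V - n_1 - \frac{n_2}{2} - n_3 + 2\right)\frac{5v_3}{8}$, and left the reconciliation as an open "point of care" rather than resolving it; the resolution is precisely the observation about vanishing vertices at right-angled gluings, which the paper makes explicit in the proof of Proposition~\ref{prop5}.
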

	\begin{proof}
		Let us consider polyhedron $P'=P \cup_{F_{2}} P$. It has $V'=2V-2n_{2}$ vertices and $\operatorname{vol} (P') = 2 \operatorname{vol} (P)$. Remark that  $P'$ has a face $F'_{1}$ which is a union of two copies of $F_{1}$ and has $2n_1-4$ vertices. Similar, $P'$ has a face $F'_{3}$ which is a union of two copies of $F_{3}$ and has $2n_3-4$ vertices.  Applying Proposition~\ref{prop5} to polyhedron $P'$ and its faces $F'_{1}$ and $F'_{3}$. We get
		$$
		2\operatorname{vol} (P) = \operatorname{vol} (P') \leqslant (V' - (2n_1-4)-(2n_3-4)) \cdot \frac{5v_3}{8}.
		$$
		Using formula for $V'$ and dividing both sides of the inequality by $2$ we get the result.  
	\end{proof}

	\begin{proposition} {\rm \cite[Theorem~3.5]{EV21} }  \label{prop7}  
		If $P$ is a compact  right-angled hyperbolic polyhedron with $V \geqslant 46$ vertices and with only pentagonal or hexagonal faces, then 
		$$
		\operatorname{vol} (P) \leqslant  (V-14) \cdot \frac{5v_{3}}{8}. 
		$$ 
	\end{proposition}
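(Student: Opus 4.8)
The plan is to mirror the strategy of Proposition~\ref{prop4} from the ideal case: reduce the problem to locating three sequentially adjacent hexagonal faces, and then invoke Proposition~\ref{prop6}. First I would record the combinatorial constraints. Since the $1$-skeleton of $P$ is $3$-regular, Euler's formula gives $2F = V + 4$ and the weighted face count $\sum_{k}(6-k)\, p_k = 12$, where $p_k$ denotes the number of $k$-gonal faces. As $P$ has only pentagons and hexagons, this reads $p_5 = 12$, so $F = 12 + p_6$ and hence $V = 20 + 2 p_6$. In particular the hypothesis $V \geqslant 46$ is exactly equivalent to $p_6 \geqslant 13$.

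Next I would set up an edge-counting argument to force the existence of a hexagon with two hexagonal neighbours. Let $e_{56}$ and $e_{66}$ denote the numbers of edges shared by a pentagon and a hexagon, and by two hexagons, respectively. Counting edge-sides around the hexagonal faces yields $6 p_6 = 2 e_{66} + e_{56}$, while counting around the pentagons gives $e_{56} \leqslant 5 p_5 = 60$. Suppose, toward a contradiction, that no hexagon is adjacent to two distinct hexagonal faces; that is, in the adjacency graph whose vertices are the hexagonal faces (with an edge whenever two hexagons share a side) every vertex has degree at most $1$. Then each hexagon is incident to at most one hexagon–hexagon edge, so $2 e_{66} \leqslant p_6$, which gives
$$
e_{56} = 6 p_6 - 2 e_{66} \geqslant 5 p_6.
$$
Combined with $e_{56} \leqslant 60$ this forces $p_6 \leqslant 12$, contradicting $p_6 \geqslant 13$.

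This contradiction produces a hexagonal face $F_2$ adjacent to two distinct hexagonal faces $F_1$ and $F_3$. Applying Proposition~\ref{prop6} with $n_1 = n_2 = n_3 = 6$ then gives
$$
\operatorname{vol}(P) \leqslant (V - 6 - 6 - 6 + 4) \cdot \frac{5 v_3}{8} = (V - 14)\cdot \frac{5 v_3}{8},
$$
which is the claimed bound.

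The point requiring care, and the main obstacle, is the counting step: one must know that two faces of $P$ meet in at most one edge, so that the hexagon adjacency graph is simple and the bound $2 e_{66} \leqslant p_6$ is legitimate. This holds because the $1$-skeleton of a right-angled polyhedron is $3$-connected (Steinitz/Andreev). The essential insight is that $p_5$ is \emph{pinned} at $12$ while $p_6$ grows with $V$, so once $p_6 \geqslant 13$ the pentagons are too few to separate all the hexagons; everything else is a direct appeal to the already-established Propositions~\ref{prop5} and~\ref{prop6}.
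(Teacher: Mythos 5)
Your proof is correct and takes essentially the same route as the paper: both fix $p_5 = 12$, deduce $p_6 \geqslant 13$ from $V \geqslant 46$, force a hexagon $F_2$ adjacent to two distinct hexagons by counting pentagon--hexagon incidences against the bound $5p_5 = 60$, and conclude via Proposition~\ref{prop6} with $n_1 = n_2 = n_3 = 6$. Your edge-count identity $6p_6 = 2e_{66} + e_{56}$ is just a repackaging of the paper's classification into isolated hexagons and pairs ($6k_1 + 10k_2 \leqslant 60$ against $k_1 + 2k_2 \geqslant 13$), though your explicit appeal to $3$-connectedness to ensure two faces share at most one edge makes rigorous a point the paper leaves implicit.
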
 
	
	\begin{proof}
			Denote by $F$ number of faces and by $p_{k}$ the number of $k$--gonal faces ($k \geqslant 5$) of polyhedron $P$. Then from $2F = V+4$ we obtain 
		$$
		p_{5} = 12 + \sum_{k\geqslant 7} (k-6) p_{k}.
		$$ 
		By the assumption, $P$ has only pentagonal and hexagonal faces. Hence $F = 12 + p_{6} \geqslant 25$ and $p_{6} \geqslant 13$. 
	We observe that in the polyhedron $P$ there are three hexagonal faces $F_{1}$, $F_{2}$, $F_{3}$ such that $F_{2}$ is adjacent to both $F_{1}$ and $F_{3}$. Assume by contradiction that there is no such triple of faces. Then each hexagonal face is adjacent to at most one hexagonal face. If a hexagonal face has no adjacent hexagons (we will say that it is isolated), then it is adjacent to $6$ pentagonal faces. If two hexagons are adjacent to each other and none of them is adjacent to another hexagon (we will say that the faces form a pair), then their union is adjacent to $10$  pentagonal faces. If we have $k_{1}$ isolated hexagonal faces and $k_{2}$ pairs of hexagonal faces, then they are adjacent to pentagonal faces through $6 k_{1} + 10 k_{2}$ sides. Since $p_{5} = 12$ we have $6k_{1} + 10 k_{2} \leqslant  60$.  But  $k_{1} + 2 k_{2} = p_{6 }\geqslant  13$ implies $2k_{2} \geqslant 13 - k_{1}$, hence $6k_{1} + 10 k_{2} \geqslant 65 + k_{1} > 60$. This contradiction implies that there is a triple of hexagonal faces $F_{1}$, $F_{2}$, $F_{3}$, where $F_{2}$ is adjacent to $F_{1}$ and $F_{3}$. By applying Proposition~\ref{prop6}  we get the result. 
	\end{proof}

	\begin{corollary} \label{cor4} 
		If $P$ is a compact right-angled hyperbolic polyhedron with  $V$ vertices and at least one $k$-gonal face, $k \geqslant 8$. Then 
		$$
		\operatorname{vol} (P) \leqslant (V - 14) \cdot  \frac{5v_{3}}{8}. 
		$$ 
	\end{corollary}	
	
	\begin{proof}
		Consider three faces $F_{1}$, $F_{2}$ and $F_{3}$ of $P$, where one of them, say $F_{2}$, is $k$-gonal, $k \geqslant 8$, and $F_{1}$, $F_{3}$ are adjacent to $F_{2}$. Then we can apply Proposition~\ref{prop6} for the triple $(n_{1}, n_{2}, n_{3})$, where $n_{1} \geqslant 5$, $n_{2} \geqslant 8$ and $n_{3} \geqslant 5$. Then 
		\begin{eqnarray*}
			\operatorname{vol} (P) & \leqslant & \left(V  - n_{1} - n_{2} - n_{3} + 4 \right) \cdot \frac{5v_3}{8} \\ 
			& \leqslant & \left(V - 5 - 8 - 5 + 4 \right) \cdot \frac{5v_{3}}{8} = (V-14) \cdot \frac{5v_{3}}{8}, 
		\end{eqnarray*} 
		that gives the result. 
	\end{proof}	
	
	\begin{proposition} \label{prop8} 
		Let $P$ be a compact right-angled polyhedron with $V \geqslant 48$ vertices. Assume that $P$ has one heptagonal face and all other faces are pentagons or hexagons. Then 
		$$
		\operatorname{vol} (P) \leqslant (V - 14) \cdot \frac{5v_{3}}{8}. 
		$$ 
	\end{proposition}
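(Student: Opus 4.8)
The plan is to follow the scheme of Proposition~\ref{prop7}, but with the heptagon included on an equal footing with the hexagons. First I would record the combinatorics. Since $P$ is $3$-regular and has no faces with fewer than five sides, the relation $2F = V+4$ gives, exactly as in the proof of Proposition~\ref{prop7}, the identity $p_5 = 12 + \sum_{k\geqslant 7}(k-6)p_k$. Under the hypothesis $p_7 = 1$ and $p_k = 0$ for $k \geqslant 8$ this yields $p_5 = 13$, so that $F = 14 + p_6$. Combined with $2F = V+4$ and $V \geqslant 48$ we obtain $F \geqslant 26$ and hence $p_6 \geqslant 12$.

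Call a face \emph{big} if it is a hexagon or the heptagon; there are $B = p_6 + 1 \geqslant 13$ big faces, and every remaining face is a pentagon ($p_5 = 13$). The goal is to produce a big face $F_2$ adjacent to two big faces $F_1$ and $F_3$: since the only face larger than a hexagon is the single heptagon, any such triple satisfies $n_1 + n_2 + n_3 \geqslant 18$, and Proposition~\ref{prop6} then gives
$$
\operatorname{vol}(P) \leqslant \bigl( V - n_1 - n_2 - n_3 + 4 \bigr)\cdot \frac{5v_3}{8} \leqslant (V-14)\cdot \frac{5v_3}{8},
$$
as required. To locate such a triple I would argue by contradiction, assuming every big face is adjacent to at most one big face; then in the adjacency graph of big faces the components are $k_1$ isolated vertices and $k_2$ edges, with $k_1 + 2k_2 = B$. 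Counting the big sides that meet a pentagon (one side per big--pentagon edge) gives $6p_6 + 7 - 2k_2$ such sides, which cannot exceed the total number $5p_5 = 65$ of pentagon sides. Substituting $p_6 = k_1 + 2k_2 - 1$ reduces this to $6k_1 + 10k_2 \leqslant 64$, whereas $k_1 + 2k_2 = B \geqslant 13$ forces $6k_1 + 10k_2 \geqslant 65 + k_1 \geqslant 65$, a contradiction.

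The step requiring care is precisely this incidence count. Copying Proposition~\ref{prop7} verbatim, using only hexagons and treating the heptagon's sides as unconstrained slack, does \emph{not} work: the heptagon can absorb up to seven hexagon--pentagon incidences, and with merely $p_6 \geqslant 12$ the resulting inequality $60 \leqslant 6k_1 + 10k_2 \leqslant 72$ is consistent, so no contradiction arises. Promoting the heptagon to a big face resolves this, because it simultaneously raises the count of big faces to $B \geqslant 13$ and charges the heptagon's own seven sides against the pentagon budget; these two effects together are exactly what closes the gap and produces the contradiction, after which Proposition~\ref{prop6} finishes the argument.
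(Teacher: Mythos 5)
Your proposal is correct and takes essentially the same route as the paper: the same identity $p_5 = 13$, $F = 14 + p_6 \geqslant 26$, $p_6 \geqslant 12$, the same contradiction argument showing that the hexagonal and heptagonal faces (treated together, exactly as the paper does) cannot each have at most one big neighbour, and the same final application of Proposition~\ref{prop6} with $n_1 + n_2 + n_3 \geqslant 18$. The only cosmetic difference is bookkeeping: you count big--pentagon incidences via the matching parameters $k_1, k_2$ (mirroring the paper's proof of Proposition~\ref{prop7}), yielding $6k_1 + 10k_2 \leqslant 64$ against $6k_1 + 10k_2 \geqslant 65$, while the paper bounds incidences per face directly ($6 + 5p_6 \leqslant 65$ versus $p_6 \geqslant 12$) --- an equivalent double count.
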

	
	\begin{proof}
		Denote by $F$ number of faces and by $p_{k}$ the number of $k$--gonal faces ($k \geqslant 5$) of polyhedron $P$. Then from $2F = V+4$ we obtain 
		$$
		p_{5} = 12 + \sum_{k\geqslant 7} (k-6) p_{k}.
		$$ 
		By the assumption, $P$ has one heptagonal face, and therefore, $13$ pentagonal faces. Hence $F = 14 + p_{6} \geqslant 26$ and $p_{6} \geqslant 12$. 
		
		Observe that $P$ contains three hexagonal or heptagonal faces $F_{1}$, $F_{2}$, $F_{3}$ such that $F_{2}$ is adjacent to $F_{1}$ and $F_{3}$ both. By a contradiction, assume that any hexagonal or heptagonal face of $P$ has at most one adjacent hexagonal or heptagonal face. Hence any hexagonal face has at least $5$ adjacent pentagonal faces and the heptagonal face has at least $6$ adjacent pentagonal faces. Therefore, totally there are at least $6 + 5 p_{6}$ sides of pentagons which are adjacent to sides of hexagonal and heptagonal faces. Since the total number of pentagonal faces is $13$, we get $6 + 5 p_{6} \leqslant 65$. Since  $p_{6} \geqslant 12$, the inequality is not satisfied. 
		
		This contradiction implies that there is a triple of sequentially adjacent hexagonal or heptagonal faces $F_{1}$, $F_{2}$, $F_{3}$, where $F_{2}$ is adjacent to $F_{1}$ and $F_{3}$. Applying Proposition~\ref{prop7} we get 
		\begin{eqnarray*} 
			\operatorname{vol} (P) & \leqslant & \left(V  - n_{1} - n_{2} - n_{3} + 4 \right) \cdot \frac{5v_3}{8} \\ 
			& \leqslant & \left(V  - 6 -  6 - 6 + 4 \right) \cdot \frac{5v_3}{8}  = (V-14) \cdot \frac{5v_{3}}{8}, 
		\end{eqnarray*} 
		that completes the proof. 
	\end{proof} 
	
	\begin{table}[ht] \caption{Compact right-angled hyperbolic polyhedra.} \label{table2} 
		\begin{tabular}{|r|r|r|r|r|} \hline
			$\#$  of vertices & $\#$ of polyhedra & $\#$ of volumes & $\min$ volume & $\max$ volume \\ \hline \hline 
			20 & 1 & 1 & 4.306208  & 4.306208 \\ \hline 
			22 & 0 & 0 & - & -  \\ \hline 
			24 & 1 & 1 & 6.023046  & 6.023046 \\ \hline 
			26 & 1 & 1 & 6.967011 & 6.967011\\ \hline
			28 & 3 & 3 & 7.563249  &  8.000234\\ \hline
			30 & 4 & 4 & 8.612415   & 8.946606 \\ \hline
			32 & 12 & 12 & 9.019053  & 9.977170\\ \hline
			34 & 23 & 23 & 9.730847 &10.986057  \\ \hline
			36 & 71 & 71 & 10.416044 & 12.084191 \\ \hline
			38 & 187 & 187 &  11.058763  & 13.138893 \\ \hline
			40 & 627 & 627 & 11.708462 &  14.222648 \\ \hline
			42 & 1970 & 1952 & 12.352835  & 15.300168  \\ \hline
			44 & 6833 & 6771 &  12.996118 & 16.397833 \\ \hline
			46 & 23384 & 23082 & 13.637792 & 17.486616 \\ \hline
		\end{tabular}
	\end{table}	
	
	Summarizing Proposition~\ref{prop7} (the case if there is no $k$-gonal faces for $k \geqslant 7$), Corollary~\ref{cor4} (the case if there is at least one $k$-gonal face, $k \geqslant 8$), Proposition~\ref{prop8} (the case if there is one heptagonal face and other faces are $k$-gonal, $k \leqslant 6$), and  Corollary~\ref{cor3} (the cases if there are two faces such that each of them has at least $7$ edges) we have the statement for polyhedra with $V \geqslant 48$ faces.  For polyhedra with $24 \leqslant V \leqslant 46$ vertices the inequality holds by direct computation, see Table~\ref{table2}. This completes the proof of Theorem~\ref{th104}.


\begin{thebibliography}{1}
		
		\bibitem{A70}
		E.M.~Andreev, \emph{On convex polyhedra of finite volume in Lobachevskii space,} Math. USSR-Sb. \textbf{12:2} (1970), 255--259. 
		
		\bibitem{A09}
		C.~Atkinson, \emph{Volume estimates for equiangular hyperbolic Coxeter polyhedra,} Algebraic \& Geometric Topology, \textbf{9} (2009), 1225--1254.
		
		\bibitem{CKP19}
		A.~Champanerkar, I.~Kofman, J.~Purcell, \emph{Right-angled polyhedra and alternating links,} preprint arXiv:1910.13131 (2019). 
		
		\bibitem{EV21}
		A.~Egorov, A.~Vesnin, \emph{On correlation of hyperbolic volumes of fullerenes with their properties}, submitted to Computational and Mathematical Biophysics (2020). 
		
		\bibitem{F18}
		R.~Flint, \emph{Intercusp geodesics and cusp shapes of fully augmented links}, preprint arXiv:1811.07397 (2018). 
		
		\bibitem{I20}
		T.~Inoue, \emph{Exploring the list of smallest right-angled hyperbolic polyhedra,} Experimental Mathematics  
		(2019), published online 11 April, 2019,  doi: 10.1080/10586458.2019.1593897. 
		
		\bibitem{Lu20}
		Z.~L{\"u}, L.~Wu, \emph{Combinatorial characterization of right-angled hyperbolicity of 3-orbifolds}, preprint arXiv:2009.11034 (2020).
		
		\bibitem{MMT20} 
		J.S.~Meyer, C.~Millichap, R.~Trapp, \emph{Arithmeticity and hidden symmetries of fully augmented pretzel link complements}, New York J. Math. \textbf{26} (2020), 149--183.
		
		\bibitem{V87}
		A.~Vesnin, \emph{Three-dimensional hyperbolic manifolds of L{\"o}bell type,} Siberian Math. J., \textbf{28:5} (1987), 731--734. 
		
		\bibitem{V17}
		A.~Vesnin, \emph{Right-angled polyhedra and hyperbolic 3-manifolds,} Russian Math. Surveys, \textbf{72:2} (2017), 335--374. 
		
		\bibitem{EV20}
		A.~Vesnin, A.~Egorov, \emph{Ideal right-angled polyhedra in Lobachevsky space},  Chebyshevskii Sbornik, \textbf{21:2} (2020),  65--83. Preprint version is available at arXiv:1909.11523
		
	\end{thebibliography}
\end{document}